\newtheorem{lem}{Lemma}[section]
\newtheorem{thm}[lem]{Theorem}
\begin{document}
\title{Hankel determinants of sums of consecutive weighted Schr\"{o}der numbers}

\author{Sen-Peng Eu}
\address{Department of Applied Mathematics\\
National University of Kaohsiung\\
Kaohsiung 811, Taiwan, ROC} \email{speu@nuk.edu.tw}

\author{Tsai-Lien Wong}
\address{Department of Applied Mathematics\\
National  Sun Yat-sen University\\
Kaohsiung 804, Taiwan, ROC} \email{tlwong@math.nsysu.edu.tw}

\author{Pei-Lan Yen}
\address{Department of Applied Mathematics\\
National University of Kaohsiung\\
Kaohsiung 811, Taiwan, ROC} \email{yenpl.tw@gmail.com}

\thanks{Partially supported by National Science Council, Taiwan under grant NSC
98-2115-M-390-002-MY3, NSC 99-2115-M-110-001-MY3}

\subjclass{15A15, 05A19}

\keywords{Hankel determinants, Schr\"{o}der numbers, Lattice paths}

%\keywords{Hankel determinants, Schr\"{o}der numbers, Non-intersecting lattice paths, combinatorial methods}

\maketitle
%%%%%%%%%%%%%%%%%%%%%%%%%%%%%%%%%%%%%%%%%%%%%%%%%%%%%%%%%%%

\begin{abstract}
 For a real number $t$, let $r_\ell(t)$ be the total weight of all $t$-large Schr\"{o}der paths of length $\ell$,
 and $s_\ell(t)$ be the total weight of all $t$-small Schr\"{o}der  paths of length $\ell$.
For constants $\alpha, \beta$, in this article we derive recurrence
formulae for the determinats of the Hankel matrices $\det_{1\le
i,j\le n} (\alpha r_{i+j-2}(t) +\beta r_{i+j-1}(t))$,
  $\det_{1\le i,j\le n} (\alpha r_{i+j-1}(t) +\beta r_{i+j}(t))$,
   $\det_{1\le i,j\le n} (\alpha s_{i+j-2}(t) +\beta s_{i+j-1}(t))$, and  $\det_{1\le i,j\le n} (\alpha s_{i+j-1}(t) +\beta s_{i+j}(t))$
combinatorially via suitable lattice path models.

% which are paths in $\mathbb{Z}\times \mathbb{Z}$, from $(0,0)$ to $(2\ell,0)$ that never pass below the $x$-axis and
% having each up or down step of weight $1$ and each level step of weight $t$. Let $s_\ell(t)$ be the total weight of all $t$-small Schr\"{o}dar
% paths of length $\ell$, which are $t$-large Schr\"{o}der paths of length $\ell$ having no level steps on the $x$-axis.

% In this article we evaluate the Hankel determinants of the linear combination of two consecutive weighted large (resp. small)
% Schr\"{o}der numbers combinatorially via suitable lattice path models.

% We derive recurrence formulae for  $\det_{1\le i,j\le n} (\alpha r_{i+j-2}(t) +\beta r_{i+j-1}(t))$,
%  $\det_{1\le i,j\le n} (\alpha r_{i+j-1}(t) +\beta r_{i+j}(t))$,
%   $\det_{1\le i,j\le n} (\alpha s_{i+j-2}(t) +\beta s_{i+j-1}(t))$, and  $\det_{1\le i,j\le n} (\alpha s_{i+j-1}(t) +\beta r_{i+j}(t))$.

% For constants $\alpha$ and $\beta$, we evaluate the determinants of the Hankel matrices generated by the polynomial sequences
% $\{\alpha r_\ell(t)+\beta r_{\ell+1}(t)\}_{n\geq k}$ and $\{\alpha s_\ell(t)+\beta s_{\ell+1}(t)\}_{\ell \geq k}$ for $k=0$ and $k=1$ by deriving a recurrence relation for each
%of them. These results are proved combinatorially via suitable
%lattice path models.

\end{abstract}

%%%%%%%%%%%%%%%%%%%%%%%%%%%%%%%%%%%%%%%%%%%%%%%%%%%%%%%%%%%%%%%%%%%%%%%%%%%%%%%%%%%%%%%%%%%%
%%%%%%%  section    %%%%%%%%%%%%%%%%%%%%%%%%%%%%%%%%%%%%%%%%%%%%%%%%%%%%%%%%%%%%%%%%%%%%%%%%
%%%%%%%%%%%%%%%%%%%%%%%%%%%%%%%%%%%%%%%%%%%%%%%%%%%%%%%%%%%%%%%%%%%%%%%%%%%%%%%%%%%%%%%%%%%%

\section{Introduction}

%%%%%%%%%%%%%%%%%%%%%%%%%%%%%%%%%%%%%%%%%%%%%%%%%%%%%%%%%%%%%%%%%%%%%%%%%%%%%%%%%%%%%%%%%%%%
\subsection{Hankel determinants from Catalan, Motzkin, and Schr\"oder numbers}
Let $\{a_\ell\}_{\ell\ge 0}$ be a sequence. For a nonnegative
integer $k$, the \emph{Hankel matrix} $A^{(k)}_n$ of order $n$
generated by this sequence is defined to be the matrix
$$A_n^{(k)}=(a_{k+i+j-2})_{1 \leq i, j \leq n}.$$
%$A_n^{(k)}=\begin{pmatrix}h_{i, j}\end{pmatrix}_{1 \leq i, j \leq
%n}$ with $h_{i,j}=a_{k+i+j-2}$. That is,
%
%$$A_n^{(k)}:=\begin{pmatrix}
%                         a_{k} & a_{k+1} & \cdots & a_{k+n-1} \\
%                         a_{k+1} & a_{k+2} & \cdots & a_{k+n} \\
%                         \vdots & \vdots & \ddots & \vdots \\
%                         a_{k+n-1} & a_{k+n} & \cdots & a_{k+2n-2} \\
%                       \end{pmatrix}_{n \times n}.$$
%Denote $\left|A_n^{(k)}\right|:=\det(A_n^{(k)})=\det_{1 \le i, j \le
%n}(a_{k+i+j-2})$ the determinant of this Hankel matrix.

When $\{a_n\}_{n\ge 0}$ is one of the three classic combinatorial
sequences (Catalan, Motzkin, or Schr\"oder numbers) arising from the
lattice path enumerations, the problem to compute the determinant
$\det(A_n^{(k)})$ has been extensively studied. Readers may refer to
\cite{MA99, ITZ09, Kra05, Kra10} for more examples, especially the
comprehensive references listed in~\cite{Kra10}.

We give a quick introduction. The Catalan number
$c_\ell=\frac{1}{\ell+1}{2\ell\choose \ell}$ counts the number of
Dyck paths of length $\ell$, which are the lattice paths in the
plane $\mathbb{Z} \times \mathbb{Z}$ from $(0, 0)$ to $(2\ell, 0)$
using steps $\textsf{U}=(1,1)$, $\textsf{D}=(1,-1)$ that never pass
below the $x$-axis. It is a folklore that $\det_{1\le i,j\le
n}(c_{i+j-2})=1$, $\det_{1\le i,j\le n}(c_{i+j-1})=1$ and
$\det_{1\le i,j\le n}(c_{i+j})=n+1$. In 1986 De Sainte-Catherine and
Viennot~\cite{CV85} proved that $\det_{1\le i,j\le
n}(c_{i+j+k-2})=\prod_{1\le i\le j\le k-1 }\frac{i+j+2n}{i+j}$. A
very extensive generalization is given recently by Krattenthaler
in~\cite{Kra10}.

The Motzkin numbers $\{m_\ell\}_{\ell\ge 0}=\{1,1,2,4,9,21,51,\dots
\}$ count the number of Motzkin paths of length $\ell$, which are
the lattice paths in the plane $\mathbb{Z} \times \mathbb{Z}$ from
$(0, 0)$ to $(\ell, 0)$ using steps $\textsf{U}=(1,1),
\textsf{D}=(1,-1), \textsf{L}=(1,0)$ that never pass below the
$x$-axis. In 1998 Aigner~\cite{Aig98} proved that $\det_{1\le i,j\le
n}(m_{i+j-2})=1$ for all $n$ and $\det_{1\le i,j\le n}(m_{i+j-1})$
equals $1$ if $n\equiv 0,1 \bmod 6$, equals $0$ if $n\equiv 2,5
\bmod 6$, or equals $-1$ if $n\equiv 3,4 \bmod 6$.

The large Schr\"{o}der numbers $\{r_\ell\}_{\ell\ge
0}=\{1,2,6,22,90,394,1806,\dots \}$ count the number of large
Schr\"{o}der paths of length $\ell$, which are the paths in the
plane $\mathbb{Z} \times \mathbb{Z}$ from $(0, 0)$ to $(2\ell, 0)$
using $\textsf{U}=(1, 1)$, $\textsf{D}=(1, -1)$, $\textsf{L}=(2, 0)$
that never pass below the $x$-axis. And the small Schr\"{o}der
numbers $\{s_\ell\}_{\ell\ge 0}=\{1,1,3,11,45,197,903,\dots\}$ count
the number of small Schr\"{o}der paths of length $\ell$, which are
large Schr\"{o}der paths of length $\ell$ with no level steps on the
$x$-axis. By applying Gessel-Viennot-Lindstr\"{o}m lemma, in 2005 Eu
and Fu~\cite{EF05} proved that $\det_{1\le i,j\le
n}(r_{i+j-2})=2^{{n\choose 2}}$, $\det_{1\le i,j\le
n}(r_{i+j-1})=2^{{n+1\choose 2}}$, $\det_{1\le i,j\le
n}(s_{i+j-2})=2^{{n\choose 2}}$, and $\det_{1\le i,j\le
n}(s_{i+j-1})=2^{{n\choose 2}}$. At the same time Brualdi and
Kirkland also obtained the results in the cases of large
Schr\"{o}der numbers via linear algebra~\cite{BK05} .

Note that the determinants $\det(A_n^{(k)})$ can be obtained for all
$k\ge 2$ once we know $\det(A_n^{(0)})$ and $\det(A_n^{(1)})$. This
fact~\cite{Aig07} is from the well-known identity
\begin{equation}\label{dog}
   \det(A_{n+1}^{(k)})\det(A_{n-1}^{(k+2)})=\det(A_{n}^{(k)})\det(A_{n}^{(k+2)})-\det(A_{n}^{(k+1)})^2,
\end{equation}
for $n\ge 1$.

%%%%%%%%%%%%%%%%%%%%%%%%%%%%%%%%%%%%%%%%%%%%%%%%%%%%%%%%%%%%%%%%%%%%%%%%%%%%%%%%%%%%%%%%%%%%
\subsection{Hankel determinants for sums of two consecutive terms}
A variation is to consider the determinant of the Hankel matrix
generated by the sequence $\{a_\ell+a_{\ell+1}\}_{\ell\ge 0}$. That
is, to consider the determinant $\det_{1 \leq i, j \leq n}
(a_{k+i+j-2}+a_{k+i+j-1})$.

For Catalan numbers, in 2002 Cvetkovi\'c, Rajkovi\'c and Ivkovi\'c
proved algebraically that $$\displaystyle \det_{1\le i,j \le n}
(c_{i+j-2}+c_{i+j-1})=f_{2n+1} \quad \mbox{and} \quad \det_{1\le i,j
\le n} (c_{i+j-1}+c_{i+j})=f_{2n+2},$$ where $f_n$ is the Fibonacci
number~\cite{CRI02}. This elegant result stimulated several
follow-up works, see~\cite{BCQY10, CF07, Cig02, CK11, Kra10, RPB07}
for examples.

The Motzkin case was also done by several authors~\cite{CY11, CK11}.
One can generalise to the weighted version. For a real number $t$, a
$t$-Motzkin path is a Motzkin path in which the steps
$\textsf{U,D,L}$ have weight $1,1,t$ respectively, and the weight of
a path is the product of the weights of its steps. Let $m_\ell(t)$
be the sum of weights of all $t$-Motzkin paths of length $\ell$,
then the Hankel determinants $\det_{1\le i,j \le n}
(m_{k+i+j-2}(t))$ and $\det_{1\le i,j \le n} (m_{k+i+j-1}(t))$ were
computed in ~\cite{Kra05,SX08} for examples. By using of the lattice
path arguments, Cameron and Yip~\cite{CY11} also obtained when
$k=0,1$ the recurrence formulae of the determinant
$$\det_{1\le i,j \le n} (m_{k+i+j-2}(t)+m_{k+i+j-1}(t)).$$

%For example, Cameron and Yip~\cite{CY11} considered the weigted
%version as follows. A $t$-Motzkin path is a Motzkin path in which
%the steps $\textsf{U,D,L}$ have weight $1,1,t$ respectively, and the
%weight of a path is the product of the weights of its steps. One can
%think that each level step has a choice from $t$ colors. Let
%$m_\ell(t)$ be the sum of weight of all $t$-Motzkin paths of length
%$\ell$. They considered the Hankel determinant generated by the
%\emph{polynomial sequence} $\{m_\ell(t)\}_{\ell \ge 0}$. By using of
%lattice path arguments, they gave a summation formula for
%$|A_n^{(0)}|$ and a recurrence formula for $|A_n^{(1)}|$.

Similary, let a \emph{$t$-large (or $t$-small) Schr\"{o}der path} be
a large (or small) Schr\"{o}der path in which the steps
$\textsf{U,D,L}$ are weighted $1,1,t$ respectively, and the weight
of this path is the product of the weights of its steps. Let
$r_\ell(t)$ (or $s_\ell(t)$) denote the sum of weights of all
$t$-large (or $t$-small) Schr\"{o}der paths of length $\ell$. Note
that $r_0(t)=s_{0}(t)$ and $r_\ell(t)=(1+t)s_\ell(t)$ for $\ell \geq
1$. Recently Sulanke and Xin~\cite{SX08} proved that
$$\displaystyle \det_{1\le i,j \le n} (r_{i+j-2}(t))=(1+t)^{\binom{n}{2}} \qquad \mbox{and} \qquad
\det_{1\le i,j \le n} (r_{i+j-1}(t))=(1+t)^{\binom{n+1}{2}}.$$ Hence
it is natural to consider the determinants of the Hankel matrices
with entries the sum of weighted large or small Schr\"oder numbers.
In 2007, Rajkovi{\'c}, Petkovi{\'c}, and Barry \cite{RPB07} gave the
explicit formula
\begin{alignat*}{2}
    \det_{1\le i,j \le n} (r_{i+j-2}(t)+r_{i+j-1}(t))
        &=\frac{L^{{n\choose 2}}}{2^{n+1}\sqrt{L^2+4}}((\sqrt{L^2+4}+L)(\sqrt{L^2+4}+L+2)^n\\
        &\phantom{==========}+(\sqrt{L^2+4}-L)(L+2-\sqrt{L^2+4})^n),
\end{alignat*}
where $L=1+t$. Their proof was done algebraically by way of
orthogonal polynomials.

In this paper, we will compute combinatorially the Hankel
determinants with entries the linear combinations of two consecutive
terms of $t$-large(or $t$-small) Schr\"oder numbers.
%%%%%%%%%%%%%%%%%%%%%%%%%%%%%%%%%%%%%%%%%%%%%%%%%%%%%%%%%%%%%%%%%%%%%%%%%%%%%%%

\subsection{Main results}
Denote $H_n^{(k)}(t):=(r_{i+j+k-2}(t))_{1 \leq i, j \leq n}$ and
$G_n^{(k)}(t):=(s_{i+j+k-2}(t))_{1 \leq i, j \leq n}$. For constants
$\alpha, \beta$, we
%For the sequence
%$\{a_\ell\}_{\ell\ge 0}$ define  $$A_n^{(k, k+1)}(t):=(\alpha
%a_{k+i+j-2}(t)+\beta a_{k+i+j-1}(t))_{1\le i,j\le n}$$
% to be the
%Hankel matrices generated by $\{\alpha a_\ell(t)+\beta
%a_{\ell+1}(t)\}_{\ell \geq k}$. Specifically,
define
 $$H_n^{(k, k+1)}(t):=(\alpha r_{k+i+j-2}(t)+\beta r_{k+i+j-1}(t))_{1\le i,j\le n},$$
 $$G_n^{(k, k+1)}(t):=(\alpha s_{k+i+j-2}(t)+\beta s_{k+i+j-1}(t))_{1\le i,j\le n}.$$

Our main results are the following recurrences of the Hankel
determinants for $k=0,1$.

\begin{thm}\label{th:L01}
We have the folloiwng recurrences.
\begin{enumerate}
\item Let $\Theta_0(t)=1$ and $\Theta_n(t)=\frac{1}{(1+t)^{n \choose
2}}\det(H_n^{(0, 1)}(t))$ for $n \geq 1$. Then
$$\Theta_n(t)=\alpha \sum_{m=0}^{n-1}\beta^m \Theta_{n-1-m}(t) +
\beta(1+t)\Theta_{n-1}(t).$$
%\end{thm}
%
%
%\begin{thm}\label{th:L12}
\item
Let $\Phi_0(t)=1$ and $\Phi_n(t)=\frac{1}{(1+t)^{n+1 \choose
2}}\det(H_n^{(1,2)}(t))$ for $n \geq 1$. Then
$$\Phi_n(t)=\alpha \sum_{m=0}^{n-1}\beta^m \Phi_{n-1-m}(t) +
\beta(1+t)\Phi_{n-1}(t)+\beta^n.$$
%\end{thm}
%
%
%\begin{thm}\label{th:S01}
\item
Let $\Psi_0(t)=1$ and $\Psi_n(t)=\frac{1}{(1+t)^{n \choose
2}}\det(G_n^{(0,1)}(t))$ for $n \geq 1$. Then
$$\Psi_n(t)=\alpha \sum_{m=0}^{n-1}\beta^m \Psi_{n-1-m}(t) +
\beta(1+t)\Psi_{n-1}(t)-t\beta^n.$$
%\end{thm}
%
%
%\begin{thm}\label{th:S12}
\item
Let $\Gamma_0(t)=1$ and $\Gamma_n(t)=\frac{1}{(1+t)^{n \choose
2}}\det(G_n^{(1,2)}(t))$ for $n \geq 1$. Then
$$\Gamma_n(t)=\alpha \sum_{m=0}^{n-1}\beta^m \Gamma_{n-1-m}(t) +
\beta(1+t)\Gamma_{n-1}(t)+\beta^n.$$
\end{enumerate}
\end{thm}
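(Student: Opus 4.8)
The plan is to set up a bijective lattice-path model for each Hankel determinant and then read off the recurrences by conditioning on the first return to the $x$-axis.

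\medskip

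\noindent\textbf{Step 1: From determinants to nonintersecting lattice paths.}
First I would use the Lindstr\"om--Gessel--Viennot lemma exactly as in \cite{EF05,SX08}. The entries $\alpha r_{k+i+j-2}(t)+\beta r_{k+i+j-1}(t)$ are weighted path counts between a source $A_i$ and a sink $B_j$, where the ``$\alpha$'' part records ordinary $t$-large Schr\"oder paths and the ``$\beta$'' part records Schr\"oder paths with one extra up-step--down-step ``bump'' (since $r_{\ell+1}$ counts paths two units longer). Concretely I would place the sources at $A_i=(-2(i-1),0)$ (shifted by $k$) and the sinks at $B_j=(2(j-1),0)$ on a lattice, and encode the factor $\alpha+\beta(\textsf{U}\textsf{D})$ by allowing, at the very start of each path, either nothing (weight $\alpha$) or a single $\textsf{UD}$ pair (weight $\beta$), or more precisely a ``marked'' prefix whose possible lengths are geometrically distributed; the exact gadget is what makes the $\sum_m \beta^m$ appear. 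Because no path can go below the $x$-axis and the sources/sinks are nested, $\det(H_n^{(k,k+1)}(t))$ equals the total weight of families of $n$ nonintersecting such paths, and the factor $(1+t)^{\binom{n}{2}}$ (resp.\ $(1+t)^{\binom{n+1}{2}}$) is peeled off exactly as in \cite{SX08} by pairing with the known unweighted count, leaving $\Theta_n,\Phi_n,\Psi_n,\Gamma_n$ as weights of a reduced path family.

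\medskip

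\noindent\textbf{Step 2: Reduce to a single path via the ``only the bottom path matters'' phenomenon.}
The nonintersecting family is rigid except for the bottom-most path: the other $n-1$ paths are forced (or contribute only the already-extracted power of $1+t$), so $\Theta_n(t)$ becomes the total weight of one marked $t$-Schr\"oder-type path from a fixed start to a fixed end, where the ``mark'' (the $\beta$-gadget) lives at the beginning and the path has a combinatorial length parameter $n$. This is the standard mechanism behind Catalan/Motzkin analogues in \cite{CRI02,CY11}, and I would follow Cameron--Yip's treatment \cite{CY11} closely, only with $\textsf{L}$-steps of weight $t$ added and with the level-step-on-the-axis restriction toggled for the small-Schr\"oder cases $\Psi,\Gamma$.

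\medskip

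\noindent\textbf{Step 3: Decompose by first return.}
Now I would derive the recurrence by conditioning on the structure near the origin of this single path. A $t$-large Schr\"oder path of ``size $n$'' either (a) begins with a level step on the axis (weight $t$), leaving a path of size $n-1$; or (b) begins $\textsf{U}\cdots\textsf{D}$ with a first return, splitting off an arch enclosing a sub-path and a remaining path; the $\beta$-marked prefix contributes the geometric series $\alpha\sum_{m\ge 0}\beta^m(\cdots)$ while the ``$\beta(1+t)\Theta_{n-1}$'' term comes from the case where the mark absorbs the first arch. The discrepancies between the four parts---the extra $+\beta^n$ in $\Phi_n$ and $\Gamma_n$, and the $-t\beta^n$ in $\Psi_n$---are exactly the boundary corrections from the shift $k=1$ versus $k=0$ and from forbidding level steps on the axis: in the $k=1$ cases an extra source/sink pair at the far corner contributes a lone fully-marked path of weight $\beta^n$, and in the small-Schr\"oder $k=0$ case one subtracts the weight-$t\beta^n$ configuration that would use an illegal level step on the axis. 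I would prove each of the four identities of weights in parallel, presenting the large-Schr\"oder $k=0$ case in full and indicating the modifications for the other three.

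\medskip

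\noindent\textbf{Main obstacle.}
The routine part is LGV plus first-return decomposition; the delicate part is \emph{setting up the $\beta$-gadget} so that the generating-function bookkeeping yields precisely $\alpha\sum_{m=0}^{n-1}\beta^m\Theta_{n-1-m}(t)+\beta(1+t)\Theta_{n-1}(t)$ rather than some reindexed or off-by-one variant, and, in tandem, pinning down the four constant/linear correction terms ($+\beta^n$, $-t\beta^n$) with a clean combinatorial meaning instead of ad hoc algebra. Getting the model so that these corrections are visibly the ``leftover'' configurations---and that the $(1+t)$-power extraction is compatible with the marking---is where the real work lies; once the model is right, all four recurrences should fall out of a single first-return argument.
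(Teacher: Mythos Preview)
Your Step~2 is where the argument breaks. The claim that ``the nonintersecting family is rigid except for the bottom-most path'' is simply false for the matrices $H_n^{(k,k+1)}(t)$. Rigidity holds for the pure Hankel matrix $H_n^{(0)}(t)$ (that is why $\det H_n^{(0)}(t)=(1+t)^{\binom{n}{2}}$), but once each entry is replaced by $\alpha r_{k+i+j-2}(t)+\beta r_{k+i+j-1}(t)$, every one of the $n$ sinks acquires a choice of two target points, and the nonintersecting condition couples all $n$ paths nontrivially. There is no reduction to a single marked path, and hence your Step~3 ``first-return'' argument has nothing to act on. (In particular, the recurrence you are aiming for is \emph{linear} in $\Theta$, whereas a genuine first-return decomposition of a Schr\"oder path produces a \emph{quadratic} convolution; this mismatch is already a warning sign.)

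What the paper actually does is closer to your Step~1 but then diverges completely. By multilinearity one has
\[
\det(H_n^{(k,k+1)}(t))=\sum_{i=0}^{n}\alpha^{i}\beta^{n-i}\det(H^{(k)}_{n,\overline{i}}(t)),
\]
where $H_{n,\overline{i}}^{(k)}(t)$ is the minor obtained by deleting one column; each such minor is an honest LGV count of $n$-tuples of nonintersecting Schr\"oder paths in which the $i$-th sink has been shifted two units to the right. The work is then to establish recurrences for the normalized minors $P_{n,\overline{i}},Q_{n,\overline{i}},R_{n,\overline{i}}$ in the two indices $n$ and $i$, and this is where the real content lies: the large-Schr\"oder minors and the small-Schr\"oder minors are \emph{intertwined} (one needs $G_{n-1,\overline{i}}^{(0)}$ to compute $H_{n,\overline{i}}^{(1)}$ and conversely), which is handled by two ``Key Lemmas'' giving bijections on the full $n$-tuple families. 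Only after these two-index recurrences are in hand does one sum against $\alpha^i\beta^{n-i}$ to obtain the stated one-index recurrences for $\Theta_n,\Phi_n,\Psi_n$; part~(4) then follows from $r_\ell(t)=(1+t)s_\ell(t)$. Your proposal contains none of this machinery, and the single-path shortcut you propose in its place does not exist.
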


We are happy to stay in the recurrences as the exact formulae are
usually messy even when they are not hard to derive. For example,
see $\det_{1\le i,j \le n} (r_{i+j-2}(t)+r_{i+j-1}(t))$ just
mentioned in the last paragraph.

\medskip

Note that if specializing to $t=0, \alpha=\beta=1$ in Theorem
\ref{th:L01} (1) and (2), we obtain the result in~\cite{CRI02}. If
letting $\alpha=1, \beta=0$ in Theorem \ref{th:L01} (1) and (2), we
obtain the mentioned result in~\cite{SX08}. If letting
$\alpha=\beta=1$ in Theorem \ref{th:L01} (1), with $\Theta_0(t)=1$
we have for $n\ge 1$ the nice recurrence
$$\Theta_n(t)=(t+2)\Theta_{n-1}(t)+\Theta_{n-2}(t)+\dots +\Theta_1(t)+\Theta_0(t)$$
and the main result in~\cite{RPB07} can be easily recovered.
Similarly by letting $\alpha=\beta=1$ and $\Phi_0(t)=1$ in Theorem
\ref{th:L01} (2) we have the recurrence
$$\Phi_n(t)=(t+2)\Phi_{n-1}(t)+\Phi_{n-2}(t)+\dots +\Phi_1(t)+\Phi_0(t)+1$$
for $n\ge 1$. If further letting $t=1$ we have
$\{\Theta_n(1)\}_{n\ge 0}=1,3,10,34,116,\dots$ and
$\{\Phi_n(1)\}_{n\ge 0}=1,4,14,48,164,\dots$, but now we are dealing
with $\det_{1\le i,j\le n} (r_{i+j-2}+r_{i+j-1})$ and $\det_{1\le
i,j\le n} (r_{i+j-1}+r_{i+j})$. It can be checked that
$\Theta_3(1)=34$ and $\Phi_3(1)=48$ agree with the determinants
$$\frac{1}{2^3}\det\left(\begin{array}{ccc}
    1+2 & 2+6 & 6+22 \\
    2+6 & 6+22 & 22+90 \\
    6+22 & 22+90 & 90+394
  \end{array}\right)=34
\quad \mbox{and} \quad \frac{1}{2^6}\det\left(\begin{array}{ccc}
    2+6 & 6+22 & 22+90 \\
    6+22 & 22+90 & 90+394 \\
    22+90 & 90+394 & 394+1806
  \end{array}\right)=48.
$$

\medskip

We will prove these results combinatorially by applying the
Lindstr\"om-Gessel-Viennot lemma on suitable lattice paths models. A
simplified version serving our need will be introduced in the next
section. Readers can refer to ~\cite{Aig07, GV85, Kra06} for more
information.

Here we would like to make some points about the proofs. The proofs
are unusual in the sense that from a conceptual viewpoint,
Theorem~\ref{th:L01} (1), (2), (3) are proved \emph{simultaneously},
while Theorem~\ref{th:L01} (4) is merely a direct corollary of (2).
The reason is that in order to obtain the results on $t$-large
Schr\"oder numbers one needs the corresponding results on $t$-small
Schr\"oder numbers (of smaller size) and vice versa. These
`intertwined' facts reflect in the two lemmas (Key Lemma I and Key
Lemma II) in Section~\ref{se:KL} and two lemmas in
Section~\ref{sec4}.

The rest of this paper is organized as follows. In
Section~\ref{sec2} we introduce the combinatorial models. In Section
~\ref{se:KL} we prove Key Lemma I and Key Lemma II. After more
intermediate results in Section~\ref{sec4} and Section~\ref{sec5},
we complete the proofs in Section~\ref{sec6}.

%%%%%%%%%%%%%%%%%%%%%%%%%%%%%%%%%%%%%%%%%%%%%%%%%%%%%%%%%%%%%%%%%%%%%%%%%%%%%%%%%%%%%%%%%%%%
%%%%%%%  section  2   %%%%%%%%%%%%%%%%%%%%%%%%%%%%%%%%%%%%%%%%%%%%%%%%%%%%%%%%%%%%%%%%%%%%%%%%
%%%%%%%%%%%%%%%%%%%%%%%%%%%%%%%%%%%%%%%%%%%%%%%%%%%%%%%%%%%%%%%%%%%%%%%%%%%%%%%%%%%%%%%%%%%%

\section{Lattice path models}\label{sec2}
%We need some notation.  Recall that $A_n^{(k)}:=(a_{k+i+j-2})_{1\le
%i,j\le n}$.
For each $0 \leq i \leq n$, denote
$A_{n,\overline{i}}^{(k)}$ the $n\times n$ matrix that results from
$A_{n+1}^{(k)}$ by deleting the $(n+1)$-th row and the $(i+1)$-th
column, that is,

$$A_{n, \overline{i}}^{(k)}:=\begin{pmatrix}
                         a_{k} & a_{k+1} & \cdots & a_{k+i-1} & a_{k+i+1}& \cdots & a_{k+n} \\
                         a_{k+1} & a_{k+2} & \cdots & a_{k+i} & a_{k+i+2}& \cdots & a_{k+n+1} \\
                         \vdots & \vdots & \ddots & \vdots & \vdots & \ddots & \vdots \\
                         a_{k+n-1} & a_{k+n} & \cdots & a_{k+i+n-2} & a_{k+i+n}& \cdots & a_{k+2n-1} \\
                       \end{pmatrix}_{n \times n}.$$
Recall that $H_n^{(k)}(t):=(r_{i+j+k-2}(t))_{1 \leq i, j \leq n}$
and $G_n^{(k)}(t):=(s_{i+j+k-2}(t))_{1 \leq i, j \leq n}$ and define
$H_{n, \overline{i}}^{(k)}(t)$ and $G_{n, \overline{i}}^{(k)}(t)$
accordingly.

\subsection{Lattice path models}
Define the directed graph $G$ with the vertex set $\{(x,y)\in
\mathbb{Z}^2: y\ge 0\}$ and the edge set
$\{(i,j)\to(i+2,j)\}\cup\{(i,j) \to (i+1,j+1)\}\cup\{ (i,j) \to
(i+1,j-1)\}$ for all legal $(a,b)$, each edge pointing to the right
and each level step being of weight $t$. Then a $t$-large (or small)
Schr\"oder path is a directed path on $G$ which starts from and ends
at the $x$-axis. Now we introduce our lattice path models.

\begin{enumerate}
\item
 Let $\Pi_n^{(k)}(t)$ (resp. $\Omega_n^{(k)}(t)$) be the set of
$n$-tuples $(\pi_k, \pi_{k+1}, \pi_{k+2},\cdots, \pi_{k+n-1})$
of $t$-large (resp. $t$-small) Schr\"{o}der paths subject to the
following two conditions (See Fig~\ref{fig:H3}.):
\begin{itemize}
  \item[$\bullet$] The path $\pi_{k+j}$ goes from $(-k-2j, 0)$ to $(k+2j, 0)$, $0 \leq j \leq n-1$.
  \item[$\bullet$] Any two paths do not intersect.
\end{itemize}

%%%%%%%%%%%%%%%%%%%%%%%%%%%%%%%%%%%%%%%%%%%%%%%%%%%%%%%%%%%%%%%%%%%%%%%%%%%%%%%%%%%%
    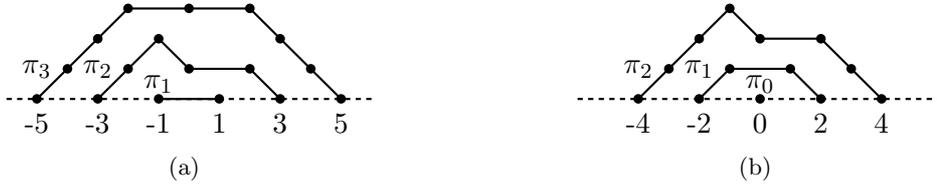
\begin{figure}[ht]
       \centering
        \subfigure[]{
            \begin{pspicture}(-3,-.5)(3,1.5)
                \psset{unit=.4cm}
                \psline(-5,0)(-2,3)(2,3)(5,0)
                \psdots(-5,0)(-4,1)(-3,2)(-2,3)(0,3)(2,3)(4,1)(3,2)(5,0)
                \psline(-3,0)(-1,2)(0,1)(2,1)(3,0)
                \psdots(-3,0)(-2,1)(-1,2)(0,1)(2,1)(3,0)
                \psline(-1,0)(1,0)
                \psdots(-1,0)(1,0)
                \psline[linestyle=dashed,dash=2pt 2pt](-6,0)(6,0)
                \rput(-5,1){$\pi_3$}
                \rput(-3,1){$\pi_2$}
                \rput(-1,.5){$\pi_1$}
                \rput(-5,-.8){-5}
                \rput(-3,-.8){-3}
                \rput(-1,-.8){-1}
                \rput(1,-.8){1}
                \rput(3,-.8){3}
                \rput(5,-.8){5}
            \end{pspicture}
          }
        \hspace{1cm}
    \subfigure[]{
            \begin{pspicture}(-3,-.5)(3,1.5)
                \psset{unit=.4cm}
                \psline(-4,0)(-1,3)(0,2)(2,2)(4,0)
                \psdots(-4,0)(-3,1)(-2,2)(-1,3)(0,2)(2,2)(3,1)(4,0)
                \psline(-2,0)(-1,1)(1,1)(2,0)
                \psdots(-2,0)(-1,1)(1,1)(2,0)
                \psline(0,0)
                \psdots(0,0)
                \psline[linestyle=dashed,dash=2pt 2pt](-6,0)(6,0)
                \rput(-4,1){$\pi_2$}
                \rput(-2,1){$\pi_1$}
                \rput(0,.5){$\pi_0$}
                \rput(-4,-.8){-4}
                \rput(-2,-.8){-2}
                \rput(0,-.8){0}
                \rput(2,-.8){2}
                \rput(4,-.8){4}
            \end{pspicture}
           }
            \caption{\small (a) A triple ($\pi_1$, $\pi_2$, $\pi_3$) $\in \Pi_3^{(1)}(t)$ of weight $t^4$.
            (b) A triple ($\pi_0$, $\pi_1$, $\pi_2$) $\in \Pi_3^{(0)}(t)$ of weight $t^2$.}
            \label{fig:H3}
    \end{figure}
%%%%%%%%%%%%%%%%%%%%%%%%%%%%%%%%%%%%%%%%%%%%%%%%%%%%%%%%%%%%%%%%%%%%%%%%%%%%%%%%%%%
\item
  For $0 \leq i \leq n$, let $\Pi_{n, \overline{i}}^{(k)}(t)$
(resp. $\Omega_{n, \overline{i}}^{(k)}(t)$) be the set of
$n$-tuples $(\pi_k, \pi_{k+1}, \pi_{k+2},\cdots, \pi_{k+n-1})$
of $t$-large (resp. $t$-small) Schr\"{o}der paths subject to the
following three conditions (See Figure~\ref{fig:H32}):
\begin{itemize}
  \item[$\bullet$] $\pi_{k+j}$ goes from $(-k-2j, 0)$ to $(k+2j, 0)$, for $0 \leq j \leq i-1$.
  \item[$\bullet$] $\pi_{k+j}$ goes from $(-k-2j, 0)$ to $(k+2j+2, 0)$, for $i \leq j \leq n-1$.
  \item[$\bullet$] Any two paths do not intersect.
\end{itemize}
%%%%%%%%%%%%%%%%%%%%%%%%%%%%%%%%%%%%%%%%%%%%%%%%%%%%%%%%%%%%%%%%%%%%%%%%%%%%%%%%%%%
    \begin{figure}[ht]
        \centering
        \subfigure[]{
            \begin{pspicture}(-3,-.5)(4,1.5)
                \psset{unit=.4cm}
                \psline(-5,0)(-2,3)(4,3)(7,0)
                \psdots(-5,0)(-4,1)(-3,2)(-2,3)(0,3)(2,3)(4,3)(5,2)(6,1)(7,0)
                \psline(-3,0)(-1,2)(0,1)(2,1)(3,0)
                \psdots(-3,0)(-2,1)(-1,2)(0,1)(2,1)(3,0)
                \psline(-1,0)(1,0)
                \psdots(-1,0)(1,0)
                \psline[linestyle=dashed,dash=2pt 2pt](-6,0)(8,0)
                \rput(-5,1){$\pi_3$}
                \rput(-3,1){$\pi_2$}
                \rput(-1,.5){$\pi_1$}
                \rput(-5,-.8){-5}
                \rput(-3,-.8){-3}
                \rput(-1,-.8){-1}
                \rput(1,-.8){1}
                \rput(3,-.8){3}
                \rput(5,-.8){5}
                \rput(7,-.8){7}
            \end{pspicture}
        }
        \subfigure[]{
            \begin{pspicture}(-3,-.5)(3.5,1.5)
                \psset{unit=.4cm}
                \psline(-4,0)(-1,3)(0,2)(2,2)(3,1)(5,1)(6,0)
                \psdots(-4,0)(-3,1)(-2,2)(-1,3)(0,2)(2,2)(3,1)(5,1)(6,0)
                \psline(-2,0)(-1,1)(1,1)(2,0)
                \psdots(-2,0)(-1,1)(1,1)(2,0)
                \psline(0,0)
                \psdots(0,0)
                \psline[linestyle=dashed,dash=2pt 2pt](-5,0)(7,0)
                \rput(-4,1){$\pi_2$}
                \rput(-2,1){$\pi_1$}
                \rput(0,.5){$\pi_0$}
                \rput(-4,-.8){-4}
                \rput(-2,-.8){-2}
                \rput(0,-.8){0}
                \rput(2,-.8){2}
                \rput(4,-.8){4}
                \rput(6,-.8){6}
            \end{pspicture}
        }
      \caption{\small (a) A triple ($\pi_1$, $\pi_2$, $\pi_3$) $\in \Pi_{3, \overline2}^{(1)}(t)$ of weight $t^5$.
      (b) A triple ($\pi_0$, $\pi_1$, $\pi_2$) $\in \Pi_{3, \overline2}^{(0)}(t)$ of weight $t^3$.}
      \label{fig:H32}
    \end{figure}
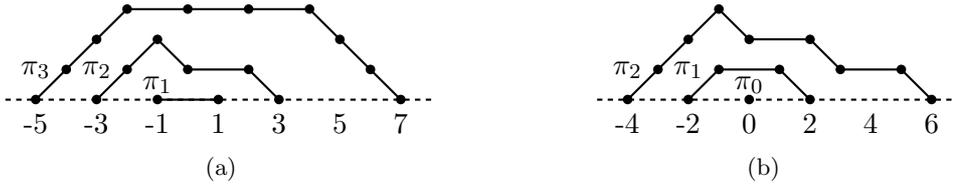
\end {enumerate}
%%%%%%%%%%%%%%%%%%%%%%%%%%%%%%%%%%%%%%%%%%%%%%%%%%%%%%%%%%%%%%%%%%%%%%%%%%%%%%%%%%%

The weight of a $n$-tuple is the product of the weights of all
the component paths; and the weight of a set $X$ of $n$-tuples,
denoted by $|X|$, is the sum of weights of all $n$-tuples in
this set.

%%%%%%%%%%%%%%%%%%%%%%%%%%%%%%%%%%%%%%%%%%%%%%%%%%%%%%%%%%%%%%%%%%%%%%%%%%%%%%%%%%%
%%%%%%%%%%%%%%%%%%%%%%%%%%%%%%%%%%%%%%%%%%%%%%%%%%%%%%%%%%%%%%%%%%%%%%%%%%%%%%%%%%%

\subsection{Lindstr\"om-Gessel-Viennot lemma}
 A family $(p_1,p_2,\dots p_n)$ of lattices paths $p_i$, $1\le
i\le n$, is called \emph{non-intersecting} if no two paths in
the family have a common point. The Lindstr\"om-Gessel-Viennot
lemma associates determinants with non-intersecting path
families in an acyclic directed graph with weights on its edges.
The following simplified version serves our needs:

\begin{lem}[\textbf{Lindstr\"om-Gessel-Viennot}]
Consider the graph $G$. Let $X_1,X_2,\dots ,X_n$ and
$Y_1,Y_2,\dots, Y_n$ be lattice points on the $x$-axis. Then the
total weight of all families $(p_1,p_2,\dots, p_n)$ of
non-intersecting $t$-Schr\"oder paths, $p_i$ running from $X_i$
to $Y_i$, is given by the determinant
$$\det_{1\le i,j\le n} (a_{i,j}),$$
where $a_{i,j}$ is the total weight of lattice paths from $X_i$ to
$Y_j$.
\end{lem}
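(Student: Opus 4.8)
The plan is to prove this by the classical sign-reversing involution argument of Lindstr\"om, Gessel and Viennot, specialized to the concrete graph $G$. First I would expand the determinant over the symmetric group,
$$\det_{1\le i,j\le n}(a_{i,j})=\sum_{\sigma\in S_n}\sign(\sigma)\prod_{i=1}^n a_{i,\sigma(i)},$$
and note that since every edge of $G$ strictly increases the $x$-coordinate, $G$ is acyclic and each set of paths from $X_i$ to $Y_j$ is finite, so each $a_{i,j}$ is a well-defined finite sum of weights. Consequently $\prod_{i=1}^n a_{i,\sigma(i)}$ is the total weight of all \emph{path systems} $P=(p_1,\dots,p_n)$ in which $p_i$ runs from $X_i$ to $Y_{\sigma(i)}$, where the weight $w(P)=\prod_i w(p_i)$ is the product of the weights of all edges occurring (each level step contributing a factor $t$). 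Hence the right-hand side equals $\sum_{P}\sign(\sigma_P)\,w(P)$, the signed sum over all path systems $P$, where $\sigma_P$ is the permutation matching starting points to endpoints.

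Next I would build a weight-preserving, sign-reversing involution $\varphi$ on the collection of path systems that are \emph{not} non-intersecting, so that all such terms cancel. Given such a $P$, let $i$ be the least index for which $p_i$ meets another path, let $v$ be the first lattice point along $p_i$ (in left-to-right order) lying on some other path, and let $j>i$ be the least index with $v\in p_j$; define $\varphi(P)$ by interchanging the portions of $p_i$ and $p_j$ lying after $v$. Since $v$ is a common vertex and all edges point rightward, the surgery again yields valid $t$-Schr\"oder paths, now with $p_i:X_i\to Y_{\sigma(j)}$ and $p_j:X_j\to Y_{\sigma(i)}$, so $\sigma_{\varphi(P)}=\sigma_P\cdot(i\,j)$ and the sign flips, while the multiset of edges (hence the weight) is unchanged. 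One checks the canonical data $(i,v,j)$ is unchanged by the swap, so $\varphi$ is a fixed-point-free involution on this collection.

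Finally I would identify the surviving terms: exactly the non-intersecting path systems. Because all $X_i$ and $Y_j$ lie on the $x$-axis while every $t$-Schr\"oder path is $x$-monotone and stays weakly above it, with the $X_i$ to the left of the $Y_j$ and listed in opposite linear order (as in the models of Section~\ref{sec2}), a "crossing" endpoint pattern is incompatible with vertex-disjointness; a short induction then forces $\sigma_P=\mathrm{id}$ for every non-intersecting $P$, each contributing $+w(p_1)\cdots w(p_n)$ with $p_i:X_i\to Y_i$. Therefore $\det_{1\le i,j\le n}(a_{i,j})=\sum_{P\ \text{non-intersecting}}w(P)$, which is precisely the asserted total weight. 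The main obstacle is the bookkeeping in the middle step --- choosing $(i,v,j)$ so that $\varphi$ is genuinely an involution --- together with the fixed-point analysis, namely verifying that the placement of the $X_i$ and $Y_j$ really does admit non-intersecting families only for $\sigma=\mathrm{id}$.
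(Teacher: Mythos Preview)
The paper does not give its own proof of this lemma; it is quoted as a known result, with the reader referred to \cite{Aig07, GV85, Kra06} for details. So there is nothing to compare against on the paper's side. Your sketch is exactly the classical Lindstr\"om--Gessel--Viennot sign-reversing involution, and the details you outline (acyclicity of $G$ from $x$-monotonicity, the tail-swap at the canonically chosen first common vertex, and the observation that on these nested source/sink configurations only the identity permutation can support a non-intersecting family) are the standard ones. One small remark: the lemma as stated in the paper does not literally impose an ordering hypothesis on the $X_i$ and $Y_j$, so your final ``crossing forces an intersection'' step is really using the specific nested placement of the endpoints in the models $\Pi_n^{(k)}(t)$, $\Omega_n^{(k)}(t)$, $\Pi_{n,\overline i}^{(k)}(t)$, $\Omega_{n,\overline i}^{(k)}(t)$ rather than the bare statement---you already flag this, and it is exactly the ``simplified version serving our need'' the paper intends.
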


From the Lindstr\"{o}m-Gessel-Viennot lemma and the models
$\Pi_{n}^{(k)}(t)$, $\Omega_{n}^{(k)}(t)$, we immediately have
the following.

\begin{lem} For integers $n,k\ge 0$, we have
\begin{equation*}
\left|\Pi_{n}^{(k)}(t)\right|=\det(H_{n}^{(k)}(t)) \quad \mbox{and}
\quad \left|\Omega_{n}^{(k)}(t)\right|=\det(G_{n}^{(k)}(t)).
\end{equation*}
\end{lem}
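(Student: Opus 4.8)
The plan is to apply the Lindstr\"om-Gessel-Viennot lemma directly to the combinatorial models $\Pi_n^{(k)}(t)$ and $\Omega_n^{(k)}(t)$ introduced above, and the work reduces to checking that the hypotheses of the lemma are met and that the relevant path counts are exactly the weighted Schr\"oder numbers $r_\ell(t)$ and $s_\ell(t)$.

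\begin{proof}[Proof sketch]
First I would fix the data for the LGV lemma. For $\Pi_n^{(k)}(t)$, take the starting points $X_i = (-k-2(i-1),0)$ and the ending points $Y_j = (k+2(j-1),0)$ for $1\le i,j\le n$; for $\Omega_n^{(k)}(t)$ use the same points but restrict attention to $t$-small Schr\"oder paths (no level step on the $x$-axis). By definition, an $n$-tuple in $\Pi_n^{(k)}(t)$ (resp.\ $\Omega_n^{(k)}(t)$) is precisely a non-intersecting family of $t$-large (resp.\ $t$-small) Schr\"oder paths with $p_i$ running from $X_i$ to $Y_i$, and the weight conventions match, so $|\Pi_n^{(k)}(t)|$ and $|\Omega_n^{(k)}(t)|$ are exactly the left-hand sides in the statement.

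Next I would compute the entry $a_{i,j}$, the total weight of all lattice paths in $G$ from $X_i$ to $Y_j$. The horizontal displacement from $X_i=(-k-2(i-1),0)$ to $Y_j=(k+2(j-1),0)$ is $2(k+i+j-2)$, and since both endpoints lie on the $x$-axis and $G$ forbids vertices with $y<0$, such a path is by definition a $t$-large Schr\"oder path of length $k+i+j-2$ in the large case, so $a_{i,j}=r_{k+i+j-2}(t)$; in the small case the restriction to paths with no level step on the $x$-axis gives $a_{i,j}=s_{k+i+j-2}(t)$. Thus the LGV determinant is exactly $\det(H_n^{(k)}(t))$, respectively $\det(G_n^{(k)}(t))$.

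Finally I would verify the acyclicity hypothesis: every edge of $G$ strictly increases the $x$-coordinate, so $G$ is acyclic, and the LGV lemma (in the simplified form stated above) applies verbatim. The only point requiring a sentence of care is that the simplified LGV lemma as stated counts \emph{all} non-intersecting families with $p_i$ from $X_i$ to $Y_i$ without an explicit permutation term; here this is legitimate because the $X_i$ and $Y_j$ are arranged in ``nonpermutable'' order along the $x$-axis (the leftmost start must connect to the leftmost end in any non-intersecting family), so only the identity permutation contributes. Once this is noted, the identities $|\Pi_n^{(k)}(t)|=\det(H_n^{(k)}(t))$ and $|\Omega_n^{(k)}(t)|=\det(G_n^{(k)}(t))$ follow immediately. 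The main obstacle, such as it is, is purely bookkeeping: making sure the coordinates of $X_i$, $Y_j$ line up so that the horizontal span is $2(k+i+j-2)$ and that the small-path condition is correctly inherited by each single path between a prescribed start and end.
\end{proof}
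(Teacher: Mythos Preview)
Your proposal is correct and follows exactly the (implicit) approach of the paper, which simply states that the lemma follows ``immediately'' from the Lindstr\"om--Gessel--Viennot lemma and the definitions of the models; you have merely spelled out the bookkeeping the paper omits.

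One small slip worth fixing: in your final paragraph you write that ``the leftmost start must connect to the leftmost end,'' but with your indexing the leftmost start is $X_n=(-k-2(n-1),0)$ and in the model it connects to the \emph{rightmost} end $Y_n=(k+2(n-1),0)$; the non-intersecting families here are nested rather than side-by-side. Your conclusion that only the identity permutation contributes is nonetheless correct (and standard) for this nested configuration, so the argument goes through unchanged once the sentence is corrected.
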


%\begin{figure}
%\begin{center}
%\includegraphics[width=3in]{gridb.eps}
%\caption{\small \small A large Schr\"oder path of weight $t^3$} \label{grid}
%\end{center}
%\end{figure}
%\subsection{2}

Similarly, from the models $\Pi_{n,\overline{i}}^{(k)}(t)$,
$\Omega_{n,\overline{i}}^{(k)}(t)$ and the definitions of $H_{n,
\overline{i}}^{(k)}(t)$, $G_{n, \overline{i}}^{(k)}(t)$
 we have the following.
\begin{lem}\label{0}
For integers $n,k\ge 0$, we have
  \begin{enumerate}
     \item $\left|\Pi_{n, \overline{0}}^{(k)}(t)\right|=\det(H_{n,\overline{0}}^{(k)}(t))=\det(H_n^{(k+1)}(t))=\left|\Pi_n^{(k+1)}(t)\right|$.\\
     \item $\left|\Omega_{n,\overline{0}}^{(k)}(t)\right|=\det(G_{n,\overline{0}}^{(k)}(t))=\det(G_n^{(k+1)}(t))=\left|\Omega_n^{(k+1)}(t)\right|$.\\
     \item $\left|\Pi_{n,\overline{n}}^{(k)}(t)\right|=\det(H_{n,\overline{n}}^{(k)}(t))=\det(H_n^{(k)}(t))=\left|\Pi_n^{(k)}(t)\right|$.\\
     \item $\left|\Omega_{n, \overline{n}}^{(k)}(t)\right|=\det(G_{n,\overline{n}}^{(k)}(t))=\det(G_n^{(k)}(t))=\left|\Omega_n^{(k)}(t)\right|$.
     \item $\left|\Pi_{n,\overline{i}}^{(k)}(t)\right|= \det(H_{n,\overline{i}}^{(k)}(t))$, for $1\le i\le n-1$.
     \item $\left|\Omega_{n,\overline{i}}^{(k)}(t)\right|=\det(G_{n,\overline{i}}^{(k)}(t))$, for $1\le i\le n-1$.
  \end{enumerate}
\end{lem}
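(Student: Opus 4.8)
The plan is to derive all six statements from one application of the Lindstr\"om--Gessel--Viennot lemma, together with two purely bookkeeping facts about $A_{n,\overline i}^{(k)}$: deleting the last column of $A_{n+1}^{(k)}$ and its last row leaves $A_n^{(k)}$, so $H_{n,\overline n}^{(k)}(t)=H_n^{(k)}(t)$ and $G_{n,\overline n}^{(k)}(t)=G_n^{(k)}(t)$; while deleting the first column (and the last row) replaces every remaining entry $a_{k+i+j-2}$ by $a_{k+i+j-1}$, so $H_{n,\overline 0}^{(k)}(t)=H_n^{(k+1)}(t)$ and $G_{n,\overline 0}^{(k)}(t)=G_n^{(k+1)}(t)$. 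Granting these, and using the preceding lemma ($|\Pi_n^{(k)}(t)|=\det H_n^{(k)}(t)$, $|\Omega_n^{(k)}(t)|=\det G_n^{(k)}(t)$), everything reduces to the single claim: for all $0\le i\le n$, $|\Pi_{n,\overline i}^{(k)}(t)|=\det H_{n,\overline i}^{(k)}(t)$ and $|\Omega_{n,\overline i}^{(k)}(t)|=\det G_{n,\overline i}^{(k)}(t)$. (For $i=n$ this is even more direct, since $\Pi_{n,\overline n}^{(k)}(t)$ and $\Omega_{n,\overline n}^{(k)}(t)$ coincide as sets with $\Pi_n^{(k)}(t)$ and $\Omega_n^{(k)}(t)$.)

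To prove the reduced claim I would record the sources and sinks of the $i$-th model. In $\Pi_{n,\overline i}^{(k)}(t)$ the $p$-th path ($1\le p\le n$, i.e. $j=p-1$) runs from $X_p=(-(k+2(p-1)),0)$ to $Y_p$, where $Y_p=(k+2(p-1),0)$ for $p\le i$ and $Y_p=(k+2p,0)$ for $p\ge i+1$; thus $X_1>X_2>\cdots>X_n$ and $Y_1<Y_2<\cdots<Y_n$. By translation invariance the total weight of all $t$-large Schr\"oder paths joining two points of the $x$-axis at distance $2\ell$ is $r_\ell(t)$ (and of all $t$-small Schr\"oder paths, $s_\ell(t)$), so the weight $a_{p,q}$ of all paths from $X_p$ to $Y_q$ equals $r_{k+p+q-2}(t)$ when $q\le i$ and $r_{k+p+q-1}(t)$ when $q\ge i+1$. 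Comparing with the description of $A_{n,\overline i}^{(k)}$ above, the matrix $(a_{p,q})_{1\le p,q\le n}$ is exactly $H_{n,\overline i}^{(k)}(t)$ (resp. $G_{n,\overline i}^{(k)}(t)$ for the small case). The Lindstr\"om--Gessel--Viennot lemma, applied to the acyclic graph $G$ (resp. to the graph obtained from $G$ by deleting the level edges lying on the $x$-axis, which is the ambient graph for $t$-small Schr\"oder paths), then yields the reduced claim; specializing $i=0$ and $i=n$ and invoking the preceding lemma finishes (1)--(6).

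The one point that is not routine, and which I expect to be the crux, is checking that the lemma is being applied legitimately, that is, that the only permutation $\sigma$ admitting a non-intersecting family whose $p$-th path runs from $X_p$ to $Y_{\sigma(p)}$ is the identity (this is also why every $n$-tuple in the model realizes the ``diagonal'' connection pattern and why there is no signed cancellation). I would argue by planarity: a $t$-Schr\"oder path is strictly increasing in the $x$-coordinate and stays weakly above the $x$-axis, so if two such paths with sources at $x_2^s<x_1^s$ are disjoint, their $x$-projections cannot overlap only partially, for then at one end of the overlap one path would be strictly above the other while at the other end it would be strictly below, forcing an intersection; hence the projections are pairwise either disjoint or nested. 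Since every source lies at $x\le 0$ and every sink at $x\ge 0$, all the projection intervals contain $0$, so they are pairwise nested, and nested according to the (strictly decreasing) order of their left endpoints $X_p$; as the sinks are listed in the same left-to-right order, $\sigma$ must be the identity. (The only degeneration, the empty path at the origin when $k=0$ and $i\ge 1$, is harmless, its projection $\{0\}$ being contained in every other interval.) This planarity bookkeeping, rather than any computation, is the substantive step, but it is entirely standard; see \cite{Aig07,GV85,Kra06}.
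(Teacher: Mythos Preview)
Your proof is correct and is exactly the argument the paper has in mind: the paper states the lemma without proof, noting only that it follows ``similarly'' from the Lindstr\"om--Gessel--Viennot lemma applied to the models $\Pi_{n,\overline i}^{(k)}(t)$, $\Omega_{n,\overline i}^{(k)}(t)$ together with the definitions of $H_{n,\overline i}^{(k)}(t)$ and $G_{n,\overline i}^{(k)}(t)$. You have simply spelled out those details, including the routine planarity check that forces the identity permutation.
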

%%%%%%%%%%%%%%%%%%%%%%%%%%%%%%%%%%%%%%%%%%%%%%%%%%%%%%%%%%%%%%%%%%%%%%%%%%%%%%%%%%%%%%%%%%%%
%%%%%%%  section  3  %%%%%%%%%%%%%%%%%%%%%%%%%%%%%%%%%%%%%%%%%%%%%%%%%%%%%%%%%%%%%%%%%%%%%%%%
%%%%%%%%%%%%%%%%%%%%%%%%%%%%%%%%%%%%%%%%%%%%%%%%%%%%%%%%%%%%%%%%%%%%%%%%%%%%%%%%%%%%%%%%%%%%

\section{Two Key lemmas}\label{se:KL}
Our proof of the main results bases on two key lemmas, which we
introduce in this section. Before that we need another easy fact
of which we omit the proof.
%allows us to write the Hankel
%determinant generated by $\{\alpha a_i+ \beta a_{i+1}\}$ for
%some $i$ as a sum of determinants whose entries are elements in
%$\{a_i \}_{i \geq 0}$.

\begin{lem}\label{A}
We have
$$\det(A_n^{(k, k+1)})=\sum_{i=0}^{n}\alpha^{i}\beta^{n-i}\det(A^{(k)}_{n, \overline{i}}).$$
\end{lem}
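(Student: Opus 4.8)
The plan is to expand $\det(A_n^{(k,k+1)})$ by multilinearity in its columns. For $m\ge 0$ write $v_m:=(a_{k+m},a_{k+m+1},\dots,a_{k+m+n-1})^{T}\in\mathbb{Z}^{n}$. Then the $j$-th column of $A_n^{(k)}$ is $v_{j-1}$, so the columns of $A_n^{(k)}$ are $v_0,v_1,\dots,v_{n-1}$; deleting the last row of $A_{n+1}^{(k)}$ produces the columns $v_0,v_1,\dots,v_n$, and hence $A_{n,\overline{i}}^{(k)}$ has columns $v_0,\dots,v_{i-1},v_{i+1},\dots,v_n$. By the definition of $A_n^{(k,k+1)}$, its $j$-th column is $\alpha v_{j-1}+\beta v_j$.

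First I would apply column-multilinearity of the determinant. Expanding each of the $n$ columns $\alpha v_{j-1}+\beta v_j$ yields $2^{n}$ terms, one for each subset $S\subseteq\{1,\dots,n\}$: in column $j$ we select $\alpha v_{j-1}$ if $j\in S$ and $\beta v_j$ if $j\notin S$. The term indexed by $S$ equals $\alpha^{|S|}\beta^{n-|S|}$ times $\det M_S$, where $M_S$ is the matrix whose $j$-th column is $v_{j-1}$ for $j\in S$ and $v_j$ for $j\notin S$.

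Next I would show $\det M_S=0$ unless $S$ is an initial segment $\{1,2,\dots,i\}$. Indeed, if $S$ is not an initial segment of $\{1,\dots,n\}$, there is an index $j\in\{1,\dots,n-1\}$ with $j\notin S$ but $j+1\in S$; then column $j$ of $M_S$ is $v_j$ and column $j+1$ is $v_{(j+1)-1}=v_j$, so $M_S$ has two equal columns and vanishes. When $S=\{1,\dots,i\}$ with $0\le i\le n$, the columns of $M_S$ are $v_0,\dots,v_{i-1},v_{i+1},\dots,v_n$, which is exactly $A_{n,\overline{i}}^{(k)}$, and $|S|=i$. Collecting the surviving terms gives
$$\det(A_n^{(k,k+1)})=\sum_{i=0}^{n}\alpha^{i}\beta^{n-i}\det(A_{n,\overline{i}}^{(k)}),$$
as claimed.

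There is no real obstacle here; the only point needing a little care is the vanishing claim, i.e.\ that every non-initial-segment choice of columns forces a repetition among the $v_m$'s, which follows at once from locating a place where the selection switches from the ``$\beta$'' option back to the ``$\alpha$'' option. One could equally run the argument on the rows of $A_n^{(k,k+1)}$, each of which is $\alpha$ times a row of $A_n^{(k)}$ plus $\beta$ times its horizontal shift; the bookkeeping is identical.
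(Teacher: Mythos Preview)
Your argument is correct: the column-multilinearity expansion together with the observation that any non--initial-segment selection $S$ forces two adjacent columns to coincide (namely, at the index $j$ where $j\notin S$ but $j+1\in S$) is exactly what is needed, and your identification of $M_{\{1,\dots,i\}}$ with $A_{n,\overline{i}}^{(k)}$ is accurate. The paper itself omits the proof of this lemma, calling it an ``easy fact,'' so there is no alternative argument to compare against; your write-up supplies precisely the standard verification one would expect here.
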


The first key lemma relates certain tuples of $t$-large
Schr\"oder paths with the determinants of certain $t$-small
Schr\"oder numbers. Let $\Pi_{n, \overline i}^*(t)$ be
 the set of $n$-tuples of
$t$-large Schr\"{o}der paths in $\Pi_{n, \overline i}^{(1)}(t)$
in which none of its paths touches the point ($2i+1$, $0$). See
Fig.~\ref{fig:H32**}(a) for example.

\begin{lem}[\textbf{Key Lemma I}]\label{c}
For $1 \leq i \leq n$, we have
\begin{equation*}
\left|\Pi_{n, \overline i}^*(t) \right|=(1+t)^{n} \det( G_{n,
\overline i}^{(0)}(t) ).
\end{equation*}
\end{lem}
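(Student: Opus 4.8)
The plan is to set up a weight-preserving bijection between the $n$-tuples in $\Pi_{n,\overline i}^*(t)$ and certain pairs built from an $(n-k)$-tuple of $t$-large Schr\"oder paths and a $k$-tuple of $t$-small Schr\"oder paths, and then recognize the right-hand side through Lemma~\ref{0}. First I would analyze the structure forced by the condition defining $\Pi_{n,\overline i}^{(1)}(t)$: the paths $\pi_1,\dots,\pi_{i}$ run from $(-1,0),\dots,(-(2i-1),0)$ to $(1,0),\dots,(2i-1,0)$, and $\pi_{i+1},\dots,\pi_{n}$ run from $(-(2i+1),0),\dots,(-(2n-1),0)$ to $(2i+3,0),\dots,(2n+1,0)$; non-intersection nests them. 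The extra hypothesis in $\Pi_{n,\overline i}^*(t)$ is that \emph{no} path touches $(2i+1,0)$. Since $\pi_{i+1}$ is the innermost of the ``shifted'' paths and starts at $(-(2i+1),0)$ and ends at $(2i+3,0)$, avoiding the $x$-axis point $(2i+1,0)$ forces $\pi_{i+1}$ — and hence by nesting all of $\pi_{i+1},\dots,\pi_{n}$ — to leave the $x$-axis immediately with a $\textsf{U}$ step from its start and to return only at its endpoint, never touching the $x$-axis strictly in between on the left portion near $2i+1$; more precisely the crucial consequence is that near abscissa $2i+1$ every outer path is strictly above the axis.

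The key step is then to peel off, from each of the outer paths $\pi_{i+1},\dots,\pi_{n}$, the forced initial $\textsf{U}$ and a matching step, converting the ``large'' behavior into ``small'' behavior. Concretely I would argue that the map $\pi_{i+j}\mapsto$ (the path obtained by deleting the first step and the last step, or by reflecting/translating the portion between the first contact points) sends the sub-tuple $(\pi_{i+1},\dots,\pi_n)$ to an $(n-i)$-tuple that, after the appropriate coordinate shift, is exactly a non-intersecting family of $t$-\emph{small} Schr\"oder paths realizing $\Omega_{n-i,\overline 0}^{(0)}(t)$-type data, while $(\pi_1,\dots,\pi_i)$ is untouched and realizes $\Pi_{i}^{(1)}(t)$-type data; the condition that no path touches $(2i+1,0)$ is precisely what guarantees the two blocks can be separated and that the outer block genuinely becomes \emph{small} Schr\"oder paths (no level step on their base line). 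Tracking weights, each removed step is a $\textsf{U}$ or $\textsf{D}$ of weight $1$, so the map is weight-preserving; and the factor $(1+t)^n$ should emerge from the relation $r_\ell(t)=(1+t)s_\ell(t)$ for $\ell\ge 1$ applied path-by-path — more likely, one reassembles the count as $(1+t)^n\det(G_{n,\overline i}^{(0)}(t))$ by noting that the generating function for each of the $n$ relevant ``large vs.\ small'' choices contributes one factor of $(1+t)$, and then invoking the Lindstr\"om-Gessel-Viennot lemma in the form of Lemma~\ref{0} to identify $|\Omega_{n,\overline i}^{(0)}(t)|=\det(G_{n,\overline i}^{(0)}(t))$.

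An alternative, possibly cleaner route is purely determinantal: expand $\det(H_{n,\overline i}^{(1)}(t))$ and subtract off the contribution of tuples that \emph{do} touch $(2i+1,0)$, using inclusion–exclusion on which paths pass through that point; the paths through a common $x$-axis point split into independent sub-families, giving a convolution identity among the $|\Pi^{(\cdot)}_{\cdot,\overline\cdot}(t)|$'s that, combined with $r_\ell(t)=(1+t)s_\ell(t)$, collapses to the stated formula. I would develop the bijective version as the main argument but keep this as a sanity check.

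The main obstacle I anticipate is making the ``peeling'' bijection genuinely well-defined and invertible at the boundary between the two blocks: one must check that after deleting the forced steps the outer paths neither collide with each other (nesting is preserved, which is routine) nor with the inner block near abscissa $2i+1$, and — the subtle point — that the resulting outer paths have \emph{no level step on their own base line}, i.e.\ are honestly small Schr\"oder paths, which is exactly where the hypothesis ``no path touches $(2i+1,0)$'' must be used in full strength. Verifying the inverse map (prepending a $\textsf{U}$ and a $\textsf{D}$, equivalently re-lifting a small path to a large one that avoids $(2i+1,0)$) and confirming it lands back in $\Pi_{n,\overline i}^*(t)$ rather than in the larger set $\Pi_{n,\overline i}^{(1)}(t)$ is the crux; everything else is bookkeeping of coordinates and weights.
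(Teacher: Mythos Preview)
Your main approach has a genuine gap. The decomposition into an ``inner block'' $(\pi_1,\dots,\pi_i)$ and an ``outer block'' $(\pi_{i+1},\dots,\pi_n)$ cannot yield a product formula, because the two blocks are not independent: all $n$ paths must remain mutually non-intersecting, so the (peeled) outer paths still enclose the inner paths and the count does not split as $|\Pi_i^{(1)}(t)|\cdot|\Omega_{n-i,\overline 0}^{(0)}(t)|$ or anything of that shape. Indeed the target $(1+t)^n\det(G_{n,\overline i}^{(0)}(t))$ is a single $n\times n$ determinant, not a product of two smaller ones. Your account of the factor $(1+t)^n$ as arising ``path-by-path'' is likewise not workable bijectively: a factor of $(1+t)$ corresponds to a binary choice such as $\textsf{L}$ versus $\textsf{UD}$, and only the innermost path $\pi_1$ (of length one) offers such a choice --- every $\pi_j$ with $j\ge 2$ is already forced by non-intersection to begin with~$\textsf{U}$.

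The paper's argument is quite different and hinges on a determinantal identity rather than a single bijection. Non-intersection, together with the $*$-condition at the one place where it is actually needed (to rule out $\pi_{i+1}$ ending with an $\textsf{L}$ through $(2i+1,0)$), forces every $\pi_j$ with $j\ge 2$ to have the form $\textsf{U}\pi_j'\textsf{D}$, while $\pi_1\in\{\textsf{L},\textsf{UD}\}$. Splitting on these two cases for $\pi_1$ gives bijections to $t\cdot\Pi_{n-1,\overline{i-1}}^{(2)}(t)$ and to $\Pi_{n,\overline i}^{(0)}(t)$ respectively. Each contribution is then written as an $n\times n$ determinant; the two determinants differ only in their first column, namely $(t,0,\dots,0)^T$ versus $(r_0(t),r_1(t),\dots,r_{n-1}(t))^T$, so by multilinearity they add to a single determinant with first column $(1+t,r_1(t),\dots,r_{n-1}(t))^T$. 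Finally one uses $r_\ell(t)=(1+t)s_\ell(t)$ for $\ell\ge 1$ together with $s_0(t)=1$ to pull a factor $(1+t)$ out of each of the $n$ columns, obtaining $(1+t)^n\det(G_{n,\overline i}^{(0)}(t))$. The step you are missing is precisely this column-linearity manoeuvre: the power $(1+t)^n$ is produced columnwise in the determinant, not pathwise by a bijection.
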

    \begin{proof}
        We count in two parts. Let $X$ (resp. $Y$) be the set of
        $n$-tuples in $\Pi_{n, \overline i}^*(t)$ with
        $\pi_1=\textsf{L}$ (resp. $\pi_1 =\textsf{UD}$). Note
        that
        $\left|\Pi_{n, \overline i}^*(t)\right| = |X| + |Y|.$\\

$\bullet$ For $X$: There is a bijection between $X$ and
        $\Pi_{n-1, \overline{i-1}}^{(2)}(t)$ by mapping
        $(\textsf{L}, \pi_2, \pi_3, \ldots, \pi_n) \in X$ to
        $(\pi_2', \pi_3', \ldots, \pi_n') \in \Pi_{n-1,
        \overline{i-1}}^{(2)}(t)$ where $\pi_j = \textsf{U}
        \pi_j' \textsf{D}$ for $2 \leq j\leq n$. See
        Figure~\ref{fig:H32**} as an example.

       %%%
        \begin{figure}[ht]
        \centering
        \subfigure[]{
            \begin{pspicture}(-3,-.5)(4,1.5)
                \psset{unit=.4cm}
                \psline(-5,0)(-2,3)(4,3)(7,0)
                \psdots(-5,0)(-4,1)(-3,2)(-2,3)(0,3)(2,3)(4,3)(5,2)(6,1)(7,0)
                \psline(-3,0)(-1,2)(0,1)(2,1)(3,0)
                \psdots(-3,0)(-2,1)(-1,2)(0,1)(2,1)(3,0)
                \psline(-1,0)(1,0)
                \psdots(-1,0)(1,0)
                \psline[linestyle=dashed,dash=2pt 2pt](-6,0)(8,0)
                \rput(-5,1){$\pi_3$}
                \rput(-3,1){$\pi_2$}
                \rput(-1,.5){$\pi_1$}
                \rput(-5,-.8){-5}
                \rput(-3,-.8){-3}
                \rput(-1,-.8){-1}
                \rput(1,-.8){1}
                \rput(3,-.8){3}
                \rput(5,-.8){5}
                \rput(7,-.8){7}
                \psdots[dotstyle=x, dotscale=3](5,0)
            \end{pspicture}
        }
        \subfigure[]{
            \begin{pspicture}(-3,-.5)(3.5,1)
                \psset{unit=.4cm}
                \psline(-4,0)(-3,1)(-2,2)(0,2)(2,2)(4,2)(5,1)(6,0)
                \psdots(-4,0)(-3,1)(-2,2)(0,2)(2,2)(4,2)(5,1)(6,0)
                \psline(-2,0)(-1,1)(0,0)(2,0)
                \psdots(-2,0)(-1,1)(0,0)(2,0)
                \psline[linestyle=dashed,dash=2pt 2pt](-6,0)(7,0)
                \rput(-4,1){$\pi_3'$}
                \rput(-2,1){$\pi_2'$}
                \rput(-6,-.8){-6}
                \rput(-4,-.8){-4}
                \rput(-2,-.8){-2}
                \rput(0,-.8){0}
                \rput(2,-.8){2}
                \rput(4,-.8){4}
                \rput(6,-.8){6}
            \end{pspicture}
            }
            \caption{\small A bijection between (a) A $3$-tuple ($\pi_1$, $\pi_2$, $\pi_3$) $\in X \subset \Pi_{3, \overline2}^{*}(t)$ of weight
            $t^5$ and
            (b) A $2$-tuple ($\pi_2'$, $\pi_3'$) $\in \Pi_{2, \overline1}^{(2)}(t)$ of weight $t^4$.}
          \label{fig:H32**}
        \end{figure}
        %%%

Hence the weight of $(\textsf{L}, \pi_2, \pi_3, \ldots,
        \pi_n) \in X$ is equal to $t$ times the weight of
        $(\pi_2', \pi_3', \ldots, \pi_n') \in \Pi_{n-1,
        \overline{i-1}}^{(2)}(t)$. Therefore,
$$|X|=  t \left| \Pi_{n-1, \overline{i-1}}^{(2)}(t)\right|
= \det\left(
                             \begin{array}{c|ccccccc}
                               t & r_1(t) & r_2(t) & \cdots & r_{i-1}(t) & r_{i+1}(t) & \cdots & r_{n}(t)\\ \hline
                               0 & r_2(t) & r_3(t) & \cdots & r_{i}(t) & r_{i+2}(t) & \cdots & r_{n+1}(t) \\
                               \vdots & \vdots & \vdots & \ddots & \vdots & \vdots & \ddots & \vdots \\
                               0 & r_n(t) & r_{n+1}(t) & \cdots & r_{n+i-2}(t) & r_{n+i}(t) & \cdots & r_{2n-1}(t) \\
                             \end{array}
                        \right)_{n\times n}.
$$
            %\begin{align*}
            %    |X|= & t \left| \Pi_{n-1, \overline{i-1}}^{(2)}(t)\right|\\
               % & = t     \det \left(
               %              \begin{array}{ccccccc}
               %                r_2(t) & r_3(t) & \cdots & r_i(t) & r_{i+2}(t) & \cdots & r_{n+1}(t)\\
               %                r_3(t) & r_4(t) & \cdots & r_{i+1}(t) & r_{i+3}(t) & \cdots & r_{n+2}(t) \\
               %                \vdots & \vdots & \ddots & \vdots & \vdots & \ddots & \vdots \\
               %                r_n(t) & r_{n+1}(t) & \cdots & r_{n+i-2}(t) & r_{n+i}(t) & \cdots & r_{2n-1}(t) \\
               %              \end{array}
               %         \right)_{(n-1)\times(n-1)}\\
               % & = \det\left(
               %              \begin{array}{c|ccccccc}
               %                t & r_1(t) & r_2(t) & \cdots & r_{i-1}(t) & r_{i+1}(t) & \cdots & r_{n}(t)\\ \hline
               %                0 & r_2(t) & r_3(t) & \cdots & r_{i}(t) & r_{i+2}(t) & \cdots & r_{n+1}(t) \\
               %                \vdots & \vdots & \vdots & \ddots & \vdots & \vdots & \ddots & \vdots \\
               %                0 & r_n(t) & r_{n+1}(t) & \cdots & r_{n+i-2}(t) & r_{n+i}(t) & \cdots & r_{2n-1}(t) \\
               %              \end{array}
               %         \right)_{n\times n}
            %\end{align*}

$\bullet$ For $Y$: There is a weight-invariant bijection between
        $Y$ and $\Pi_{n, \overline i}^{(0)}(t)$, which carries
        $(\textsf{UD}, \pi_2, \pi_3, \ldots, \pi_n) \in Y$ to
        $(\pi_0',\pi_1',\ldots,\pi_{n-1}') \in \Pi_{n, \overline
        i}^{(0)}(t)$ where
        $\pi_i=\textsf{U}\pi_{i-1}'\textsf{D}$, $1 \leq i \leq
        n$. See Figure~\ref{fig:Y} as an example. Hence

        %%%
        \begin{figure}[h]
        \subfigure[]{
            \begin{pspicture}(-3,-.5)(3,1)
                \psset{unit=.4cm}
                \psline(-5,0)(-4,1)(-3,2)(-2,3)(0,3)(2,3)(3,2)(4,1)(6,1)(7,0)
                \psdots(-5,0)(-4,1)(-3,2)(-2,3)(0,3)(2,3)(3,2)(4,1)(6,1)(7,0)
                \psline(-3,0)(-2,1)(-1,2)(1,2)(2,1)(3,0)
                \psdots(-3,0)(-2,1)(-1,2)(1,2)(2,1)(3,0)
                \psline(-1,0)(0,1)(1,0)
                \psdots(-1,0)(0,1)(1,0)
                \psline[linestyle=dashed,dash=2pt 2pt](-6,0)(8,0)
                \rput(-5,1){$\pi_3$}
                \rput(-3,1){$\pi_2$}
                \rput(-1,.5){$\pi_1$}
                \rput(-5,-.8){-5}
                \rput(-3,-.8){-3}
                \rput(-1,-.8){-1}
                \rput(1,-.8){1}
                \rput(3,-.8){3}
                \rput(5,-.8){5}
                \rput(7,-.8){7}
                \psdots[dotstyle=x, dotscale=3](5,0)
            \end{pspicture}
           }
           \hspace{1cm}
        \subfigure[]{
            \begin{pspicture}(-3,-.5)(3,1)
                \psset{unit=.4cm}
                \psline(-4,0)(-3,1)(-2,2)(0,2)(2,2)(3,1)(4,0)(6,0)
                \psdots(-4,0)(-3,1)(-2,2)(0,2)(2,2)(3,1)(4,0)(6,0)
                \psline(-2,0)(-1,1)(1,1)(2,0)
                \psdots(-2,0)(-1,1)(1,1)(2,0)
                \psdots(0,0)
                \psline[linestyle=dashed,dash=2pt 2pt](-6,0)(7,0)
                \rput(-4,1){$\pi_2'$}
                \rput(-2,1){$\pi_1'$}
                \rput(0,.5){$\pi_0'$}
                \rput(-4,-.8){-4}
                \rput(-2,-.8){-2}
                \rput(0,-.8){0}
                \rput(2,-.8){2}
                \rput(4,-.8){4}
                \rput(6,-.8){6}
            \end{pspicture}
            }
            \caption{\small A bijection between (a) A $3$-tuple ($\pi_1$, $\pi_2$, $\pi_3$) $\in Y \subseteq \Pi_{3, \overline2}^{*}(t)$ of weight $t^4$.
            and (b) A $3$-tuple ($\pi_0'$, $\pi_1'$, $\pi_2'$) $\in \Pi_{3, \overline2}^{(0)}(t)$ of weight $t^4$.}
            \label{fig:Y}
        \end{figure}
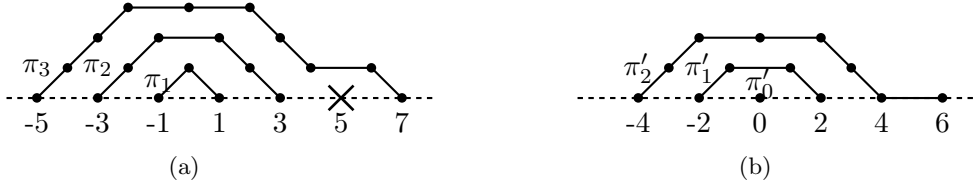

        %%%
       % The weight of $(\textsf{UD}, \pi_2, \pi_3, \ldots, \pi_n) \in Y$ is equal to the weight of
       % $(\pi_0',\pi_1',\ldots,\pi_{n-1}') \in \Pi_{n, \overline i}^{(0)}(t)$, i.e. the total weight of $Y$ is
            $$|Y|=\left|  \Pi_{n, \overline i}^{(0)}(t) \right| = \det\left(
                             \begin{array}{ccccccc}
                                r_0(t) & r_1(t) & \cdots & r_{i-1}(t) & r_{i+1}(t) & \cdots & r_{n}(t)\\
                                r_1(t) & r_2(t) & \cdots & r_{i}(t) & r_{i+2}(t) & \cdots & r_{n+1}(t) \\
                                \vdots & \vdots & \ddots & \vdots & \vdots & \ddots & \vdots \\
                                r_{n-1}(t) & r_{n}(t) & \cdots & r_{n+i-2}(t) & r_{n+i-1}(t) & \cdots & r_{2n-1}(t) \\
                             \end{array}
                        \right)_{n\times n}$$

Now by the fact that $r_0(t)=1$, $s_0(t)=1$,
        $r_n(t)=(1+t)s_n(t)$ for $n \ge 1$  and direct
        calculation we have
$$\left|\Pi_{n, \overline i}^*(t)\right| = |X| + |Y|= (1+t)^n \det(  G_{n, \overline i}^{(0)}(t) ),$$
as desired.
%
%            \begin{align*} \left|\Pi_{n, \overline i}^*(t)\right| &= |X| + |Y| \\
%                            &= \det\left(
%                            \begin{array}{ccccccc}
%                                (1+t)s_0(t) & r_1(t) & \cdots & r_{i-1}(t) & r_{i+1}(t) & \cdots & r_{n}(t)\\
%                                r_1(t) & r_2(t) & \cdots & r_{i}(t) & r_{i+2}(t) & \cdots & r_{n+1}(t) \\
%                                \vdots & \vdots & \ddots & \vdots & \vdots & \ddots & \vdots \\
%                                r_{n-1}(t) & r_{n}(t) & \cdots & r_{n+i-2}(t) & r_{n+i-1}(t) & \cdots & r_{2n-1}(t) \\
%                             \end{array}
%                        \right)_{n\times n} \\
%                        &= (1+t)^n \det\left(
%                             \begin{array}{ccccccc}
%                                s_0(t) & s_1(t) & \cdots & s_{i-1}(t) & s_{i+1}(t) & \cdots & s_{n}(t)\\
%                                s_1(t) & s_2(t) & \cdots & s_{i}(t) & s_{i+2}(t) & \cdots & s_{n+1}(t) \\
%                                \vdots & \vdots & \ddots & \vdots & \vdots & \ddots & \vdots \\
%                                s_{n-1}(t) & s_{n}(t) & \cdots & s_{n+i-2}(t) & s_{n+i-1}(t) & \cdots & s_{2n-1}(t) \\
%                             \end{array}
%                        \right)_{n\times n} \\
%                        &= (1+t)^n \det(  G_{n, \overline i}^{(0)}(t) ).
%            \end{align*}
    \end{proof}

%%%%%%%%%%%%%%%%%%%%%%%%%%%%%%%%%%%%%%%%%%%%%%%%%%%%%%%%%%%%%%%%%%%%%%%%%%%%%%%%%%%

The second key lemma relates determinants of certain $t$-small
Schr\"oder numbers to determinants (of smaller size) of certain
$t$-large Schr\"oder numbers.

\begin{lem}[\textbf{Key Lemma II}]\label{4}
For $1 \leq i \leq n$, we have
    $$\det( G_{n,\overline i}^{(0)}(t) )
    = \det( H_{n-1,\overline{i-1}}^{(1)}(t) ) + (1+t)^{n-1} \det( G_{n-1,\overline i}^{(0)}(t) ).$$
\end{lem}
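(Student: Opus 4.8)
The plan is to argue combinatorially through the lattice path models of Section~\ref{sec2}, in the same spirit as the proof of Key Lemma~\ref{c}. By Lemma~\ref{0} we have $\det(G_{n,\overline i}^{(0)}(t))=\left|\Omega_{n,\overline i}^{(0)}(t)\right|$ and $\det(H_{n-1,\overline{i-1}}^{(1)}(t))=\left|\Pi_{n-1,\overline{i-1}}^{(1)}(t)\right|$, so the lemma is equivalent to the identity
$$\left|\Omega_{n,\overline i}^{(0)}(t)\right|=\left|\Pi_{n-1,\overline{i-1}}^{(1)}(t)\right|+(1+t)^{n-1}\left|\Omega_{n-1,\overline i}^{(0)}(t)\right|.$$
In a tuple $(\pi_0,\pi_1,\dots,\pi_{n-1})\in\Omega_{n,\overline i}^{(0)}(t)$ the path $\pi_0$ is forced to be the single point $(0,0)$; consequently every $\pi_j$ with $j\ge 1$ begins with a $\textsf{U}$ step out of $(-2j,0)$, ends with a $\textsf{D}$ step into its right endpoint, and avoids $(0,0)$. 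The key step is to split $\Omega_{n,\overline i}^{(0)}(t)$ according to whether or not every $\pi_j$ ($j\ge 1$) meets the $x$-axis only at its two endpoints.

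For the ``single-arch'' part --- tuples in which each $\pi_j$ ($j\ge1$) is a single arch $\textsf{U}\,\widehat\pi_j\,\textsf{D}$ with $\widehat\pi_j$ a $t$-large Schr\"oder path --- I would delete $\pi_0$, strip the outermost $\textsf{U}$ and $\textsf{D}$ off every $\pi_j$, and translate the resulting family down by one unit. A short check of endpoints shows this lands in $\Pi_{n-1,\overline{i-1}}^{(1)}(t)$ (a short arch $(-2j,0)\to(2j,0)$ produces a path $(-2j+1,0)\to(2j-1,0)$, a long one produces $(-2j+1,0)\to(2j+1,0)$), the map is weight-preserving since every deleted step has weight $1$, it is visibly invertible (raise by one, prepend $\textsf{U}$, append $\textsf{D}$, reinsert the origin path), and the nesting/non-intersection conditions are carried along in both directions. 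This identifies the single-arch part with $\Pi_{n-1,\overline{i-1}}^{(1)}(t)$. Note that when $i\ge 2$ the path $\pi_1$ is automatically a single arch (it runs from $(-2,0)$ to $(2,0)$ and cannot touch $(0,0)$), whereas for $i=1$ even $\pi_1$ may return to the axis, so the two endpoint cases must be inspected by hand; moreover, when $i=n$ every tuple is single-arch, the last term drops, and the statement collapses to $\det(G_n^{(0)})=\det(H_{n-1,\overline{n-1}}^{(1)})$, which follows from the Hankel evaluations quoted in the introduction.

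It then remains to show that the complementary part --- tuples in which some $\pi_j$ returns to the $x$-axis strictly between its endpoints --- has total weight $(1+t)^{n-1}\left|\Omega_{n-1,\overline i}^{(0)}(t)\right|$. The power $(1+t)^{n-1}$ is exactly what the relation $r_\ell(t)=(1+t)s_\ell(t)$ (valid for $\ell\ge1$) provides: it says $\det(H_{n-1,\overline{i-1}}^{(1)})=(1+t)^{n-1}\det(G_{n-1,\overline{i-1}}^{(1)})$, so that after pulling one factor $(1+t)$ out of each of the $n-1$ rows all three determinants in the target identity are built from $t$-small Schr\"oder numbers. I would make this explicit by attaching to each of the $n-1$ nontrivial paths an independent decoration of weight $1+t$ (a ``$\textsf{U}$ versus $\textsf{L}$'' choice) that records where and how the configuration first drops back to the axis, so that after stripping this information the residual data is precisely a tuple of $\Omega_{n-1,\overline i}^{(0)}(t)$.

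I expect this last step to be the main obstacle: making the ``$(1+t)$-per-path'' decoration precise so that it is simultaneously weight-preserving, a genuine bijection onto (decorations)$\times\Omega_{n-1,\overline i}^{(0)}(t)$, and compatible with every non-intersection constraint --- in particular checking that two of our paths cannot ``cross'' through a level step without sharing a vertex. Should the direct bijection turn out to be unwieldy, a fallback is to push everything to the large-Schr\"oder side: the proof of Key Lemma~\ref{c} already yields $(1+t)^{n}\det(G_{n,\overline i}^{(0)})=\det(H_{n,\overline i}^{(0)})+t\,\det(H_{n-1,\overline{i-1}}^{(2)})$, and applying this for $n$ and for $n-1$ turns Key Lemma~\ref{4} into an identity purely among the Hankel-minor determinants $\det(H_{m,\overline j}^{(k)})$ of $t$-large Schr\"oder numbers, which can then be attacked with the models $\Pi_{m,\overline j}^{(k)}(t)$ and first-step decompositions exactly as in the proof of Key Lemma~\ref{c}.
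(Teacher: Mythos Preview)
Your split of $\Omega_{n,\overline i}^{(0)}(t)$ and your handling of the single-arch part are exactly right, and in fact coincide with the paper's argument: since every $\omega_j$ with $j\ne i$ is forced to be a single arch by non-intersection with its inner neighbour, your dichotomy ``all paths single arch vs.\ not'' is the same as the paper's dichotomy on $\omega_i$ alone, and your bijection to $\Pi_{n-1,\overline{i-1}}^{(1)}(t)$ is the paper's bijection.

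Where you diverge is in the complementary part $Y$. You try to manufacture the factor $(1+t)^{n-1}$ by an ad hoc decoration on each of the $n-1$ nontrivial paths, and you correctly sense that this is the sticking point; indeed you leave it undeveloped. The paper sidesteps this entirely by invoking the \emph{statement} of Key Lemma~I with $n$ replaced by $n-1$: since $\left|\Pi_{n-1,\overline i}^*(t)\right|=(1+t)^{n-1}\det(G_{n-1,\overline i}^{(0)}(t))$, it suffices to give a weight-preserving bijection $Y\to \Pi_{n-1,\overline i}^*(t)$. That bijection is immediate. A tuple in $Y$ necessarily has $\omega_i=\textsf{U}\pi_i\textsf{DUD}$ (the only possible shape, since $\omega_i$ can return to the axis only at $(2i,0)$ and, being small, must then finish with $\textsf{UD}$), so one strips $\textsf{U}\cdots\textsf{D}$ from every $\omega_j$ and additionally the trailing $\textsf{UD}$ from $\omega_i$, obtaining $(\pi_1,\dots,\pi_{n-1})\in\Pi_{n-1,\overline i}^{(1)}(t)$. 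The forbidden point $(2i+1,0)$ is automatically avoided because in $Y$ the path $\omega_i$ passes through $(2i+1,1)$, so $\omega_{i+1}$ does not, hence $\pi_{i+1}$ misses $(2i+1,0)$; thus the image lies in $\Pi_{n-1,\overline i}^*(t)$.

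So the gap in your proposal is not a wrong step but a missed shortcut: Key Lemma~I, already in hand, absorbs the troublesome $(1+t)^{n-1}$ and turns your undeveloped decoration argument into a two-line bijection. Your fallback through $\det(H_{n,\overline i}^{(0)})$ and $\det(H_{n-1,\overline{i-1}}^{(2)})$ might eventually close, but it is needlessly circuitous compared with simply applying Lemma~\ref{c} at size $n-1$.
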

    \begin{proof}
    Applying Key Lemma I on $\det( G_{n-1,\overline i}^{(0)}(t)
    )$ and (5),(6) of Lemma \ref{0} it suffices to prove

%            By Lemma \ref{c}, we know that
 %           $$ \det( G_{n, \overline i}^{(0)}(t) )= (1+t)^{-n} \left| \Pi_{n, \overline i}^*(t) \right|.$$
 %           Thus we need to prove that
 %           $$\det( G_{n,\overline i}^{(0)}(t) )= \det( H_{n-1,\overline{i-1}}^{(1)}(t) ) + \left| \Pi_{n-1, \overline i}^*(t) \right|,$$
$$\left| \Omega_{n,\overline i}^{(0)}(t) \right| = \left| \Pi_{n-1, \overline {i-1}}^{(1)}(t) \right|
            + \left| \Pi_{n-1, \overline i}^*(t) \right|.$$

Again we count in two parts.
Let $X$ be the set of $n$-tuples
            $(\omega_0,\omega_1,\ldots,\omega_{n-1})$ in
            $\Omega_{n,\overline i}^{(0)}(t)$ subject to the
            conditions that, for $\omega_i$, the downstep begins
            at $(2i+1,1)$ is the first down step from $y=1$ to
            $y=0$, and $Y:=\Omega_{n,\overline
            i}^{(0)}(t)\backslash
            X$.\\

%             pass through $y=1$ via point $(2i+1,1)$ at the
%            first time it pass below $y=1$, a
%            And $Y$ be the set of $n$-tuples
%            $(\omega_0,\omega_1,\ldots,\omega_{n-1})$ in
%            $\Omega_{n,\overline i}^{(0)}(t)$ satisfying that
%            $\omega_i$ pass through $y=1$ and go down via point
%            $(2i-1,1)$ at the first time it pass below $y=1$.

$\bullet$ For $X$: There is a weight-invariant bijection $f$
            between $X$ and $\Pi_{n-1, \overline{i-1}}^{(1)}(t)$
            by $f(\omega_0,\omega_1,\ldots,\omega_{n-1}) =
            (\pi_1,\pi_2,\ldots,\pi_{n-1})$ where
            $\omega_j=\textsf{U} \pi_j \textsf{D}$ for $1\leq
            j\leq n-1$. See Figure~\ref{fig:4X} as an example.
            Thus

            %%%
            \begin{figure}[h]
            \centering
             \subfigure[]{
                \begin{pspicture}(-3,-.5)(4,2)
                    \psset{unit=.4cm}
                    \psline(-6,0)(-5,1)(-4,2)(-3,3)(-2,4)(0,4)(1,5)(2,4)(4,4)(5,3)(6,2)(7,1)(8,0)
                    \psdots(-6,0)(-5,1)(-4,2)(-3,3)(-2,4)(0,4)(1,5)(2,4)(4,4)(5,3)(6,2)(7,1)(8,0)
                    \psline(-4,0)(-3,1)(-2,2)(-1,3)(1,3)(2,2)(3,1)(5,1)(6,0)
                    \psdots(-4,0)(-3,1)(-2,2)(-1,3)(1,3)(2,2)(3,1)(5,1)(6,0)
                    \psline(-2,0)(-1,1)(0,2)(1,1)(2,0)
                    \psdots(-2,0)(-1,1)(0,2)(1,1)(2,0)
                    \psdots(0,0)
                    \psline[linestyle=dashed,dash=2pt 2pt](-6.5,0)(8.5,0)
                    \rput(-6,1){$\omega_3$}
                    \rput(-4,1){$\omega_2$}
                    \rput(-2,1){$\omega_1$}
                    \rput(0,.5){$\omega_0$}
                    \rput(-6,-.8){-6}
                    \rput(-4,-.8){-4}
                    \rput(-2,-.8){-2}
                    \rput(0,-.8){0}
                    \rput(2,-.8){2}
                    \rput(4,-.8){4}
                    \rput(6,-.8){6}
                    \rput(8,-.8){8}
                \end{pspicture}
                }
             \subfigure[]{
                \begin{pspicture}(-3,-.5)(4,2)
                    \psset{unit=.4cm}
                    \psline(-5,0)(-4,1)(-3,2)(-2,3)(0,3)(1,4)(2,3)(4,3)(5,2)(6,1)(7,0)
                    \psdots(-5,0)(-4,1)(-3,2)(-2,3)(0,3)(1,4)(2,3)(4,3)(5,2)(6,1)(7,0)
                    \psline(-3,0)(-2,1)(-1,2)(1,2)(2,1)(3,0)(5,0)
                    \psdots(-3,0)(-2,1)(-1,2)(1,2)(2,1)(3,0)(5,0)
                    \psline(-1,0)(0,1)(1,0)
                    \psdots(-1,0)(0,1)(1,0)
                    \psline[linestyle=dashed,dash=2pt 2pt](-6,0)(8,0)
                    \rput(-5,1){$\pi_3$}
                    \rput(-3,1){$\pi_2$}
                    \rput(-1,.5){$\pi_1$}
                    \rput(-5,-.8){-5}
                    \rput(-3,-.8){-3}
                    \rput(-1,-.8){-1}
                    \rput(1,-.8){1}
                    \rput(3,-.8){3}
                    \rput(5,-.8){5}
                    \rput(7,-.8){7}
                \end{pspicture}
                }
                \caption{\small A bijection between (a) A $4$-tuple ($\omega_0$, $\omega_1$, $\omega_2$, $\omega_3$) $\in X \subseteq \Omega_{4, \overline 2}^{(0)}(t)$ of weight $t^4$.
                and (b) A $3$-tuple ($\pi_1$, $\pi_2$, $\pi_3$) $\in \Pi_{3, \overline1}^{(1)}(t)$ of weight $t^4$.}
                \label{fig:4X}
            \end{figure}

            %%%
            %Obviously, the weight is invariant under $f$.
            $$|X| = \left| \Pi_{n-1, \overline
            {i-1}}^{(1)}(t) \right|.$$

$\bullet$ For $Y$: For $(\omega_0,\omega_1,\ldots,\omega_{n-1})
            \in Y$, $\omega_i = \textsf{U} \omega_i'
            \textsf{DUD}$ for some $t$-large Schr\"{o}der path
            $\omega_i'$ above $y=1$. Thus there is a
            weight-invariant bijection $g$ between $Y$ and
            $\Pi_{n-1, \overline{i}}^{*}(t)$ denoted by
            $g(\omega_0,\omega_1,\ldots,\omega_{n-1}) =
            (\pi_1,\pi_2,\ldots,\pi_{n-1})$ where $\omega_i =
            \textsf{U} \pi_i \textsf{DUD}$ and $\omega_j =
            \textsf{U} \pi_j \textsf{D}$ for $1\leq j \leq n-1,
            \, j\neq i$. See Figure~\ref{fig:4Y} as an example.
            Hence

            %%%
            \begin{figure}[h]
            \centering
             \subfigure[]{
                \begin{pspicture}(-3,-.5)(4,2)
                    \psset{unit=.4cm}
                    \psline(-6,0)(-5,1)(-4,2)(-3,3)(-2,4)(0,4)(1,5)(2,4)(4,4)(5,3)(6,2)(7,1)(8,0)
                    \psdots(-6,0)(-5,1)(-4,2)(-3,3)(-2,4)(0,4)(1,5)(2,4)(4,4)(5,3)(6,2)(7,1)(8,0)
                    \psline(-4,0)(-3,1)(-2,2)(-1,3)(1,3)(2,2)(3,1)(4,0)(5,1)(6,0)
                    \psdots(-4,0)(-3,1)(-2,2)(-1,3)(1,3)(2,2)(3,1)(4,0)(5,1)(6,0)
                    \psline(-2,0)(-1,1)(0,2)(1,1)(2,0)
                    \psdots(-2,0)(-1,1)(0,2)(1,1)(2,0)
                    \psdots(0,0)
                    \psline[linestyle=dashed,dash=2pt 2pt](-6.5,0)(8.5,0)
                    \rput(-6,1){$\omega_3$}
                    \rput(-4,1){$\omega_2$}
                    \rput(-2,1){$\omega_1$}
                    \rput(0,.5){$\omega_0$}
                    \rput(-6,-.8){-6}
                    \rput(-4,-.8){-4}
                    \rput(-2,-.8){-2}
                    \rput(0,-.8){0}
                    \rput(2,-.8){2}
                    \rput(4,-.8){4}
                    \rput(6,-.8){6}
                    \rput(8,-.8){8}
                \end{pspicture}
             }
             \subfigure[]{
                \begin{pspicture}(-3,-.5)(4,2)
                    \psset{unit=.4cm}
                    \psline(-5,0)(-4,1)(-3,2)(-2,3)(0,3)(1,4)(2,3)(4,3)(5,2)(6,1)(7,0)
                    \psdots(-5,0)(-4,1)(-3,2)(-2,3)(0,3)(1,4)(2,3)(4,3)(5,2)(6,1)(7,0)
                    \psline(-3,0)(-2,1)(-1,2)(1,2)(2,1)(3,0)
                    \psdots(-3,0)(-2,1)(-1,2)(1,2)(2,1)(3,0)
                    \psline(-1,0)(0,1)(1,0)
                    \psdots(-1,0)(0,1)(1,0)
                    \psline[linestyle=dashed,dash=2pt 2pt](-6,0)(8,0)
                    \rput(-5,1){$\pi_3$}
                    \rput(-3,1){$\pi_2$}
                    \rput(-1,.5){$\pi_1$}
                    \rput(-5,-.8){-5}
                    \rput(-3,-.8){-3}
                    \rput(-1,-.8){-1}
                    \rput(1,-.8){1}
                    \rput(3,-.8){3}
                    \rput(5,-.8){5}
                    \rput(7,-.8){7}
                    \psdots[dotstyle=x, dotscale=3](5,0)
                \end{pspicture}
             }
             \caption{ A bijection between (a) A $4$-tuple ($\omega_0$, $\omega_1$, $\omega_2$, $\omega_3$)
             $\in Y \subseteq \Omega_{4, \overline 2}^{(0)}(t)$ of weight $t^3$.
             and (b) A $3$-tuple ($\pi_1$, $\pi_2$, $\pi_3$) $\in \Pi_{3, \overline2}^{*}(t)$ of weight $t^3$.}
             \label{fig:4Y}
             \end{figure}
            %%%

            %Similarly the weight is invariant under $g$ and then
            $$|Y| = \left| \Pi_{n-1, \overline{i}}^{*}(t) \right|,$$
and the lemma is proved.
    \end{proof}

%%%%%%%%%%%%%%%%%%%%%%%%%%%%%%%%%%%%%%%%%%%%%%%%%%%%%%%%%%%%%%%%%%%%%%%%%%%%%%%%%%%%%%%%%%%%
%%%%%%%  section 4   %%%%%%%%%%%%%%%%%%%%%%%%%%%%%%%%%%%%%%%%%%%%%%%%%%%%%%%%%%%%%%%%%%%%%%%%
%%%%%%%%%%%%%%%%%%%%%%%%%%%%%%%%%%%%%%%%%%%%%%%%%%%%%%%%%%%%%%%%%%%%%%%%%%%%%%%%%%%%%%%%%%%%

\section{Evaluations of $\det(H_{n, \overline{i}}^{(1)}(t))$ and
$\det(H_{n, \overline{i}}^{(0)}(t))$ }\label{sec4}

In this section we use two Key Lemmas to derive recurrence
formulae for $\det(H_{n, \overline{i}}^{(1)}(t))$ and
$\det(H_{n, \overline{i}}^{(0)}(t))$ combinatorially. The
summands in the formulae involve $t$-small Schr\"oder numbers.

\begin{lem}\label{b}
 For $1\leq i\leq n$, we have
 $$\det( H_{n, \overline{i}}^{(1)}(t) )
 = (1+t)^{n} \det( H_{n-1,\overline{i-1}}^{(1)}(t))
 +(1+t)^{n+1}\det(H_{n-1,\overline{i}}^{(1)}(t))
 +(1+t)^{2n-1}\det( G_{n-1, \overline{i}}^{(0)}(t) ).$$
\end{lem}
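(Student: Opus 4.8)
The plan is to count $\left|\Pi_{n,\overline i}^{(1)}(t)\right|$ combinatorially by conditioning on the behavior of the innermost path $\pi_1$, which runs from $(-2,0)$ to $(2,0)$. Since $\pi_1$ is a $t$-large Schr\"oder path of length $2$, it is one of the following: $\textsf{LL}$ (weight $t^2$), $\textsf{L}\,\textsf{UD}$ or $\textsf{UD}\,\textsf{L}$ (weight $t$ each), $\textsf{UD}\,\textsf{UD}$ (weight $1$), $\textsf{UUDD}$ (weight $1$), or $\textsf{UL D}$ (weight $t$). Grouping by whether $\pi_1$ touches the $x$-axis at the origin, one gets a cleaner split: either $\pi_1$ passes through $(0,0)$ — and then by noting that the portion of any valid tuple lying strictly to the right of the line $x=0$ mirrors the portion to the left, the whole tuple is determined by its right half, giving a bijection onto tuples counted by $\det(H_{n,\overline i}^{(1)}(t))$ of effectively smaller structure — or $\pi_1$ stays strictly above the $x$-axis on $(-2,2)$, i.e. $\pi_1=\textsf{U}\rho\textsf{D}$ with $\rho$ a path from $(-1,1)$ to $(1,1)$ of length one above $y=1$, so $\rho\in\{\textsf{L},\textsf{UD}\}$.

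First I would make the outermost bookkeeping precise: peeling $\textsf{U}\cdots\textsf{D}$ off the first step and last step of every path simultaneously is not available here because only $\pi_1$ has been localized; instead I would argue as in the proofs of Key Lemma I and Lemma~\ref{c}, splitting $\Pi_{n,\overline i}^{(1)}(t)$ according to the first ascent of $\pi_1$. Concretely: (i) the tuples where $\pi_1=\textsf{L}\sigma$ for some path $\sigma$ from $(0,0)$ to $(2,0)$ — here $\sigma\in\{\textsf{L},\textsf{UD}\}$ — map, after stripping the leading $\textsf{L}$ and the obligatory $\textsf{U}\cdots\textsf{D}$ casing forced on $\pi_2,\dots,\pi_n$ by non-intersection, to $\Pi_{n-1,\overline{i-1}}^{(1)}(t)$ with weight factor picking up $(1+t)$ from $\sigma$ and a further $(1+t)^{n-1}$ from the new level-step freedom on the $x$-axis created for the remaining $n-1$ paths (cf. the mechanism $r_\ell(t)=(1+t)s_\ell(t)$); (ii) the tuples where $\pi_1$ begins with $\textsf{U}$ and that first $\textsf{U}$ is matched by the final $\textsf{D}$ of $\pi_1$, so $\pi_1=\textsf{U}\tau\textsf{D}$ with $\tau$ from $(-1,1)$ to $(1,1)$ above $y=1$; casing all of $\pi_1,\dots,\pi_n$ by $\textsf{U}\cdots\textsf{D}$ and removing it gives an $n$-tuple in $\Pi_n^{(0)}(\cdot)$-type configuration on the remaining inner structure. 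The case $\tau=\textsf{L}$ contributes a factor $t$ and lands in a copy of $\det(H_{n-1,\overline i}^{(1)}(t))$-flavoured tuples, while $\tau=\textsf{UD}$ produces exactly a tuple counted by $\left|\Pi_{n,\overline i}^{*}(t)\right|$ shifted down, which by Key Lemma~I equals $(1+t)^{?}\det(G_{\cdot,\overline i}^{(0)}(t))$.

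The bookkeeping of the powers of $(1+t)$ is where I expect the real work to be. Each time a path is forced into the casing $\textsf{U}(\cdot)\textsf{D}$ or an $\textsf{L}$ is stripped from the $x$-axis, a factor of $(1+t)$ per affected path appears through the $r\leftrightarrow s$ relation; I would track these carefully so that the three terms come out with exponents $n$, $n+1$, and $2n-1$ respectively. The $(1+t)^{2n-1}$ in the last term is the telltale sign that it arises as $(1+t)^{n}$ (from the casing of the $n$ paths, or $n-1$ paths plus one extra) times the $(1+t)^{n-1}$ already built into Key Lemma~I applied at size $n-1$ — i.e. $(1+t)^n\cdot(1+t)^{n-1}=(1+t)^{2n-1}$ — so the decomposition I want is precisely
$$
\left|\Pi_{n,\overline i}^{(1)}(t)\right|
=(1+t)^{n}\left|\Pi_{n-1,\overline{i-1}}^{(1)}(t)\right|
+(1+t)^{n+1}\left|\Pi_{n-1,\overline{i}}^{(1)}(t)\right|
+(1+t)^{n}\left|\Pi_{n-1,\overline i}^{*}(t)\right|,
$$
after which Lemma~\ref{0}, Key Lemma~I, and Lemma~\ref{c} translate everything into determinants and finish the proof. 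The main obstacle is verifying that the non-intersection constraint interacts correctly with the peeling in case (ii): one must check that casing the $n$ paths by $\textsf{U}\cdots\textsf{D}$ and then deleting $\pi_1$'s residue genuinely produces a non-intersecting $(n-1)$-tuple with the right endpoints and the right "forbidden point" condition (for the $\textsf{*}$ piece), and that the map is a bijection — this is the step I would write out most carefully, modeling it on the bijections $f$ and $g$ in the proof of Key Lemma~II.
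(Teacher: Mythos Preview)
Your proposal has a concrete setup error and, more importantly, conditions on the wrong path.

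First, in $\Pi_{n,\overline i}^{(1)}(t)$ with $i\ge 1$ the innermost path $\pi_1$ runs from $(-1,0)$ to $(1,0)$, not from $(-2,0)$ to $(2,0)$; it is simply $\textsf{L}$ or $\textsf{UD}$, not one of the six length-$2$ paths you list. If you carry out your split with the correct endpoints you get
\[
\left|\Pi_{n,\overline i}^{(1)}(t)\right|
= t\,\left|\Pi_{n-1,\overline{i-1}}^{(2)}(t)\right|
+ \left|\Pi_{n,\overline{i}}^{(0)}(t)\right|,
\]
which lands you in a $k=2$ model and in a $k=0$ model \emph{of the same size $n$}; neither matches the three-term target, and unwinding the second summand would require Lemma~\ref{a}, which in the paper is proved \emph{after} the present lemma. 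More structurally, conditioning on $\pi_1$ cannot by itself produce the dichotomy $\overline{i-1}$ versus $\overline{i}$: nothing about $\pi_1$ knows where the endpoint shift occurs. The ``obligatory $\textsf{U}\cdots\textsf{D}$ casing'' you invoke is also not available for large Schr\"oder paths in the way you need; the factors of $(1+t)$ you try to conjure from ``new level-step freedom on the $x$-axis'' do not arise from any bijection you have described.

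The paper's proof proceeds quite differently. It first uses $r_\ell(t)=(1+t)s_\ell(t)$ to write $\det(H_{n,\overline i}^{(1)}(t))=(1+t)^n\det(G_{n,\overline i}^{(1)}(t))$, thereby passing to the small-Schr\"oder model $\Omega_{n,\overline i}^{(1)}(t)$; this is what extracts the global factor $(1+t)^n$ and, crucially, forces every path to start with $\textsf{U}$ and end with $\textsf{D}$, so that the casing $\omega_j=\textsf{UU}\pi_{j-1}\textsf{DD}$ is genuinely obligatory. The decomposition is then obtained by conditioning not on the innermost path but on $\omega_{i+1}$ --- the unique path whose right endpoint is shifted --- according to where it first descends from $y=2$ to $y=1$. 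The three cases (first descent at $(2i+1,2)$; first descent at $(2i-1,2)$ with no return to $y=2$; first descent at $(2i-1,2)$ immediately followed by $\textsf{U}$) give precisely $\left|\Pi_{n-1,\overline{i-1}}^{(1)}(t)\right|$, $(1+t)\left|\Pi_{n-1,\overline i}^{(1)}(t)\right|$, and $\left|\Pi_{n-1,\overline i}^{*}(t)\right|$, after which Key Lemma~I finishes. The essential idea you are missing is that the three-way split is local to the path indexed by $i+1$, not to the innermost one.
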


    \begin{proof}
        Since $\det( H_{n, \overline{i}}^{(1)}(t) ) = (1+t)^{n}
        \det( G_{n, \overline{i}}^{(1)}(t) )$, it suffices to
        prove
 %           $$\det( G_{n, \overline{i}}^{(1)}(t) )
 %           = \det( H_{n-1, \overline{i-1}}^{(1)}(t) ) + (1+t) \det( H_{n-1, \overline{i}}^{(1)}(t) )
 %           + (1+t)^{n-1} \det( G_{n-1, \overline{i}}^{(0)}(t)
 %       ),$$ or
        $$\left| \Omega_{n, \overline{i}}^{(1)}(t)\right|
        = \left| \Pi_{n-1, \overline{i-1}}^{(1)}(t)\right| + (1+t) \left| \Pi_{n-1, \overline{i}}^{(1)}(t) \right|
        + (1+t)^{n-1} \left| \Omega_{n-1,\overline{i}}^{(0)}(t)
        \right|.$$

The idea is the same as before by observing the first time
       $\omega_{i+1}$ descending from $y=2$ to $y=1$. Let $X$,
       $Y$ and $Z$ be respectively the subsets of $\Omega_{n,
       \overline i}^{(1)}(t)$ having the property (i), (ii) and
       (iii) for $\omega_{i+1}$.
%        Let $X$, $Y$ and $Z$ be the set of $n$-tuples in
%        $\Omega_{n, \overline i}^{(1)}(t)$ having the property
%        $(i)$, $(ii)$ and (iii), respectively.

        % We prove this by mimicking the method we used in Lemma \ref{4}.
%        We observe the behavior of $\omega_{i+1}$ in
%        an $n$-tuple of $\Omega_{n, \overline i}^{(1)}(t)$.

            \begin{enumerate}
              \item[(i)] $(2i+1,2)\to (2i+2,1)$ is the first $\textsf{D}$ leaving $y=2$.
              \item[(ii)] $(2i-1,2)\to (2i,1)$ is the first $\textsf{D}$ leaving $y=2$, and after that it will never touches $y=2$ again.
              \item[(iii)] $(2i-1,2)\to (2i,1)$ is the first $\textsf{D}$ leaving $y=2$, and immediately followed by a $\textsf{U}$ step ($(2i,1)\to (2i+1,2)$).
            \end{enumerate}

%            \begin{enumerate}
%              \item[(i)] $\omega_{i+1}$ passes through $y=2$ and goes down via the point $(2i+1,2)$ at the first time it passes below $y=2$.
%              \item[(ii)] $\omega_{i+1}$ passes through $y=2$ and goes down via the point $(2i-1,2)$ and then never passes through points in $y=2$.
%              \item[(iii)] $\omega_{i+1}$ passes through $y=2$ and goes down via the point $(2i-1,2)$ and then goes up to point $(2i+1,2)$.
%            \end{enumerate}
Note that
        $$\left| \Omega_{n, \overline{i}}^{(1)}(t)\right|=|X|+|Y|+|Z|.$$

$\bullet$ For $X$: Let $f$ be the weight-invariant bijection between
$X$ and
        $\Pi_{n-1, \overline {i-1}}^{(1)}(t)$ by
        $f(\omega_1,\omega_2,\ldots,\omega_n) =
        (\pi_1,\pi_2,\ldots,\pi_{n-1})$ where $\omega_i =
        \textsf{UU}\pi_{i-1} \textsf{DD}$, $2\leq i\leq n$
        (Note that $\omega_1=\textsf{UD}$). Hence $$|X| = \left|
        \Pi_{n-1, \overline{i-1}}^{(1)}(t) \right|.$$

$\bullet$ For $Y$: For an $n$-tuple
        $(\omega_1,\omega_2,\ldots,\omega_n)$ in $Y$, there
        exist $(n-1)$-tuples $(\pi_1,\pi_2,\ldots,\pi_{n-1})$ in
        $\Pi_{n-1, \overline{i}}^{(1)}(t)$ satisfying that
        $\omega_j = \textsf{UU}\pi_{j-1} \textsf{DD}$ where
        $2\leq j\leq n$, $j\neq i+1$, and $\omega_{i+1} =
        \textsf{UU}\pi_{i} \textsf{DLD}$ or $\omega_{i+1} =
        \textsf{UU}\pi_{i} \textsf{DDUD}$.
        Thus an $n$-tuple in $Y$ corresponds
        with two $(n-1)$-tuples in $\Pi_{n-1, \overline
        {i}}^{(1)}(t)$ and then $$|Y| = (1+t) \left| \Pi_{n-1,
        \overline {i}}^{(1)}(t) \right|.$$

$\bullet$ For $Z$: For an $n$-tuple
        $(\omega_1,\omega_2,\ldots,\omega_n)$ in $Z$, the
        steps of $\omega_{i+1}$ starting from $(2i-1,2)$ is
        $\textsf{DUDD}$ and $\omega_{i+1}$ touches $(2i+1,2)$.
        Hence it corresponds with an $(n-1)$-tuple in $\Pi_{n-1,
        \overline i}^{(1)}(t)$ in which none of its paths touches the point $(2i+1,0)$, i.e., an $(n-1)$-tuple in
        $\Pi_{n-1, \overline i}^{*}(t)$. By Lemma \ref{c}, $$|Z|
        = \left| \Pi_{n-1, \overline i}^{*}(t) \right| =
        (1+t)^{n-1} \det( G_{n-1, \overline i}^{(0)}(t) ) =
        (1+t)^{n-1} \left| \Omega_{n-1, \overline i}^{(0)}(t)
        \right|.$$

Hence the proof is complete.
    \end{proof}

\begin{lem}\label{a}
    For $1\leq i\leq n$, we have
    $$\det( H_{n, \overline{i}}^{(0)}(t) )
    = \det( H_{n-1, \overline{i-1}}^{(1)}(t) ) + t\det( H_{n-1, \overline{i}}^{(1)}(t) )
    + (1+t)^{n-1} \det( G_{n-1, \overline{i}}^{(0)}(t) ).$$
\end{lem}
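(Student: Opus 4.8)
The plan is to imitate the proof of Lemma~\ref{b}, this time working directly in the large-Schr\"oder model $\Pi_{n,\overline i}^{(0)}(t)$. By Lemma~\ref{0} one has $\det(H_{n,\overline i}^{(0)}(t))=\big|\Pi_{n,\overline i}^{(0)}(t)\big|$, $\det(H_{n-1,\overline{i-1}}^{(1)}(t))=\big|\Pi_{n-1,\overline{i-1}}^{(1)}(t)\big|$ and $\det(H_{n-1,\overline{i}}^{(1)}(t))=\big|\Pi_{n-1,\overline{i}}^{(1)}(t)\big|$, while Key Lemma~I gives $(1+t)^{n-1}\det(G_{n-1,\overline i}^{(0)}(t))=\big|\Pi_{n-1,\overline i}^{*}(t)\big|$. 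Thus the assertion reduces to the combinatorial identity
$$\big|\Pi_{n,\overline i}^{(0)}(t)\big|=\big|\Pi_{n-1,\overline{i-1}}^{(1)}(t)\big|+t\,\big|\Pi_{n-1,\overline i}^{(1)}(t)\big|+\big|\Pi_{n-1,\overline i}^{*}(t)\big|.$$

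To prove this I would partition $\Pi_{n,\overline i}^{(0)}(t)$ by the shape of the innermost ``long'' path $\pi_i$, which runs from $(-2i,0)$ to $(2i+2,0)$. Non-intersection with the path $\pi_{i-1}$ directly below it (from $(-2i+2,0)$ to $(2i-2,0)$, or the single vertex $(0,0)$ when $i=1$) forces $\pi_i$ to begin with a $\textsf{U}$ step and, by a short parity count on $x+y$, to have height at least $2$ at $x=2i-2$; hence the first $\textsf{D}$ step of $\pi_i$ going from height $1$ down to height $0$ begins either at $(2i+1,1)$, in which case it is the last step of $\pi_i$, or at $(2i-1,1)$, in which case $\pi_i$ has height exactly $2$ at $x=2i-2$, first meets the axis at $(2i,0)$, and then reaches $(2i+2,0)$ either by one $\textsf{L}$ step or by a $\textsf{UD}$ pair. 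This gives a partition $\Pi_{n,\overline i}^{(0)}(t)=X\sqcup Y\sqcup Z$: in $X$, $\pi_i$ stays at height $\ge1$ strictly between its endpoints; in $Y$, $\pi_i=\textsf{U}\rho\textsf{D}\textsf{L}$; and in $Z$, $\pi_i=\textsf{U}\rho\textsf{DUD}$, where in the last two cases $\rho$ is a $t$-large Schr\"oder path from $(-2i+1,0)$ to $(2i-1,0)$.

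Each block is evaluated by a stripping bijection, just as in Lemma~\ref{b}. For every $j$ with $1\le j\le n-1$, $j\ne i$, the same nesting/parity argument shows $\pi_j$ stays at height $\ge1$ strictly between its endpoints, so $\pi_j=\textsf{U}\pi_j'\textsf{D}$ for a unique path $\pi_j'$, and $\pi_0$ is the trivial path. On $X$ one also has $\pi_i=\textsf{U}\pi_i'\textsf{D}$, and $(\pi_0,\dots,\pi_{n-1})\mapsto(\pi_1',\dots,\pi_{n-1}')$ is a weight-preserving bijection onto $\Pi_{n-1,\overline{i-1}}^{(1)}(t)$, so $|X|=\big|\Pi_{n-1,\overline{i-1}}^{(1)}(t)\big|$. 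On $Y$, sending $\pi_i=\textsf{U}\rho\textsf{D}\textsf{L}$ to $\rho$ turns the index-$i$ path into a ``short'' one and lands the tuple in $\Pi_{n-1,\overline i}^{(1)}(t)$; since the deleted $\textsf{L}$ step has weight $t$, $|Y|=t\,\big|\Pi_{n-1,\overline i}^{(1)}(t)\big|$. On $Z$, sending $\pi_i=\textsf{U}\rho\textsf{DUD}$ to $\rho$ again lands the tuple in $\Pi_{n-1,\overline i}^{(1)}(t)$, and because $\pi_i$ passed through $(2i+1,1)$ every longer path of the image avoids $(2i+1,0)$, so the image lies in $\Pi_{n-1,\overline i}^{*}(t)$ (and this is reversible); hence $|Z|=\big|\Pi_{n-1,\overline i}^{*}(t)\big|$. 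Adding the three equalities gives the identity, and with it the lemma. The degenerate case $i=n$, where $\pi_i$ does not exist, collapses to $X$ alone and follows from Lemma~\ref{0} together with the convention that the middle term vanishes.

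The step I expect to be the main obstacle, exactly as in Lemma~\ref{b}, is block $Z$. There $\pi_i$ descends all the way to the $x$-axis and then climbs back up, so it is \emph{not} of the clean form $\textsf{U}(\cdot)\textsf{D}$ with nonnegative interior, and the reduction is therefore not a plain ``remove the outer wrapping'' map: one must spell out the correspondence carefully around the vertex $(2i,0)$, check that shifting down by one preserves non-intersection of the entire tuple, and verify that the forbidden vertex $(2i+1,0)$ appearing in $\Pi_{n-1,\overline i}^{*}(t)$ is precisely the one ruled out by the climb-back $\textsf{U}$ step, so that Key Lemma~I can be invoked. The underlying parity bookkeeping also has to be kept straight throughout, but it becomes routine once the three possible shapes of $\pi_i$ have been pinned down.
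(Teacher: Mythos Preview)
Your proposal is correct and follows essentially the same approach as the paper: both argue directly in $\Pi_{n,\overline i}^{(0)}(t)$, split according to where $\pi_i$ first descends from height $1$ to height $0$, obtain the same three blocks $X$, $Y$, $Z$ with the same stripping bijections, and invoke Key Lemma~I for $Z$. Your write-up is in fact slightly more careful than the paper's, since you spell out the parity reasoning forcing the first return to height $0$ to occur at $x=2i$ or $x=2i+2$ and you address the degenerate case $i=n$ explicitly.
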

    \begin{proof}
        The idea is the same. For every $n$-tuple $(\pi_0, \pi_1, \ldots, \pi_{n-1})\in \Pi_{n, \overline {i}}^{(0)}(t)$, the end point of $\pi_i$ is $(2i+2,0)$,
        and we look at $\pi_i$ descending from $y=1$ to $y=0$ for the first
time. There are three cases.
 Let $X$, $Y$ and $Z$ be respectively the subsets of $\Pi_{n, \overline {i}}^{(0)}(t)$ having the property (i), (ii) and
       (iii) for $\pi_i$.
            \begin{enumerate}
              \item[(i)] $(2i+1,1)\to (2i+2,0)$ is the first $\textsf{D}$ leaving $y=1$.
              \item[(ii)] $(2i-1,1)\to (2i,0)$ is the first $\textsf{D}$ leaving $y=1$, and immediately followed by a $\textsf{L}$ step ($(2i,0)\to (2i+2,0)$).
              \item[(iii)] $(2i-1,1)\to (2i,0)$ is the first $\textsf{D}$ leaving $y=1$,
              and immediately followed by a $\textsf{U}$ step
              ($(2i,0)\to (2i+1,1)$) and then a $\textsf{D}$
              step ($(2i+1,1)\to (2i+2,0)$).
            \end{enumerate}
        Note that $$\left| \Pi_{n, \overline {i}}^{(0)}(t) \right| = |X| + |Y| + |Z|.$$

$\bullet$ For $X$: It easy to see that an $n$-tuple $(\pi_0, \pi_1,
\ldots, \pi_{n-1})\in X$ corresponds to an $(n-1)$-tuple $(\pi_1',
\pi_2', \ldots, \pi_{n-1}')\in \Pi_{n-1, \overline {i-1}}^{(1)}(t)$
where $\pi_j=$ \textsf{U}$\pi_j'$\textsf{D} for $1 \leq j \leq n-1$.
Hence
$$|X|=\left| \Pi_{n-1, \overline {i-1}}^{(1)}(t) \right|=\det( H_{n-1, \overline {i-1}}^{(1)}(t) )$$

$\bullet$ For $Y$: An $n$-tuple $(\pi_0, \pi_1, \ldots,
\pi_{n-1})\in Y$ corresponds to an $(n-1)$-tuple $(\pi_1', \pi_2',
\ldots, \pi_{n-1}')\in \Pi_{n-1, \overline {i}}^{(1)}(t)$ where
$\pi_j=$ \textsf{U}$\pi_j'$\textsf{DL} for $1 \leq j \leq n-1$.
Hence
$$|Y|=t \left| \Pi_{n-1, \overline {i}}^{(1)}(t) \right|= t \det( H_{n-1, \overline {i}}^{(1)}(t) ) $$

$\bullet$ For $Z$: An $n$-tuple $(\pi_0, \pi_1, \ldots,
\pi_{n-1})\in Z$ corresponds to an $(n-1)$-tuple $(\pi_1', \pi_2',
\ldots, \pi_{n-1}')\in \Pi_{n-1, \overline {i}}^{*}(t)$ where
$\pi_j=$ \textsf{U}$\pi_j'$\textsf{DUD} for $1 \leq j \leq n-1$.
Hence,
$$|Z|=\left| \Pi_{n-1, \overline {i}}^{*}(t) \right|=(1+t)^{n-1} \det( G_{n-1, \overline {i}}^{(0)}(t) ).$$

The proof is complete by combining three identities.

%Therefore,
%            \begin{align*}
%                \left| \Pi_{n, \overline {i}}^{(0)}(t) \right| &= |X| + |Y| + |Z| \\
%                                                  &= \left| \Pi_{n-1, \overline {i-1}}^{(1)}(t) \right| + t \left| \Pi_{n-1, \overline {i}}^{(1)}(t) \right| + \left| \Pi_{n-1, \overline {i}}^{*}(t) \right|.
%            \end{align*}
%        By Lemma \ref{c}, we know that $\left| \Pi_{n-1, \overline {i}}^{*}(t)\right| = (1+t)^{n-1} \det( G_{n-1, \overline {i}}^{(0)}(t) )$. Thus,
%            \begin{align*}
%                \det( H_{n, \overline {i}}^{(0)}(t) ) &= \det( H_{n-1, \overline {i-1}}^{(1)}(t) ) + t \det( H_{n-1, \overline {i}}^{(1)}(t) ) + (1+t)^{n-1} \det( G_{n-1, \overline {i}}^{(0)}(t) ).
%            \end{align*}
    \end{proof}

%%%%%%%%%%%%%%%%%%%%%%%%%%%%%%%%%%%%%%%%%%%%%%%%%%%%%%%%%%%%%%%%%%%%%%%%%%%%%%%%%%%%%%%%%%%%
%%%%%%%  section  5  %%%%%%%%%%%%%%%%%%%%%%%%%%%%%%%%%%%%%%%%%%%%%%%%%%%%%%%%%%%%%%%%%%%%%%%%
%%%%%%%%%%%%%%%%%%%%%%%%%%%%%%%%%%%%%%%%%%%%%%%%%%%%%%%%%%%%%%%%%%%%%%%%%%%%%%%%%%%%%%%%%%%%

\section{Two recurrences}\label{sec5}

The goal of this section is to derive a recurrence formula for
$\det( H_{n, \overline{i}}^{(0)}(t))$ (resp. $\det( H_{n,
\overline{i}}^{(1)}(t))$), which will involve only the $t$-large
(resp. $t$-small) Schr\"oder numbers.

For simplicity, for $0 \leq i \leq n$, let
     \begin{align*}
       P_{n, \overline{i}}(t) &= (1+t)^{-\binom{n}{2}} \det( H_{n, \overline{i}}^{(0)}(t)),\\
       Q_{n, \overline{i}}(t) &= (1+t)^{-\binom{n+1}{2}} \det( H_{n, \overline{i}}^{(1)}(t)),\\
       R_{n, \overline{i}}(t) &= (1+t)^{-\binom{n}{2}} \det( G_{n, \overline{i}}^{(0)}(t)),
     \end{align*}
with $P_{0, \overline{0}}=1$ and $P_{i, \overline{j}}=0$ if $j> i$
(similar initial conditions hold for $Q$ and $R$). The following are
the direct translations of lemmas~\ref{4},~\ref{b}, and~\ref{a}.

 \begin{lem}\label{4'}  For $1 \leq i \leq n$, we have
 \begin{enumerate}
  \item $R_{n, \overline{i}}(t)=Q_{n-1,
  \overline{i-1}}(t)+R_{n-1, \overline{i}}(t).$
  \item $Q_{n, \overline{i}}(t)=Q_{n-1,
  \overline{i-1}}(t)+(1+t)Q_{n-1, \overline{i}}(t)+R_{n-1,
  \overline{i}}(t).$
  \item $P_{n,
  \overline{i}}(t)=Q_{n-1, \overline{i-1}}(t)+tQ_{n-1,
  \overline{i}}(t)+R_{n-1, \overline{i}}(t).$
  \end{enumerate}
 \end{lem}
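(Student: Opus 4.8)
The plan is simply to substitute the determinant identities of Lemmas~\ref{4},~\ref{b}, and~\ref{a} into the definitions of $P_{n,\overline i}(t)$, $Q_{n,\overline i}(t)$, $R_{n,\overline i}(t)$ and to absorb the resulting powers of $(1+t)$. The whole computation hinges on the two Pascal-type identities $\binom{n}{2}-\binom{n-1}{2}=n-1$ and $\binom{n+1}{2}-\binom{n}{2}=n$, together with the definitions, which let us pass freely between the determinants and their normalized versions; for instance $\det(H_{n-1,\overline{i-1}}^{(1)}(t))=(1+t)^{\binom{n}{2}}Q_{n-1,\overline{i-1}}(t)$, $\det(H_{n-1,\overline i}^{(1)}(t))=(1+t)^{\binom{n}{2}}Q_{n-1,\overline i}(t)$, and $\det(G_{n-1,\overline i}^{(0)}(t))=(1+t)^{\binom{n-1}{2}}R_{n-1,\overline i}(t)$.

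For part (1), start from $R_{n,\overline i}(t)=(1+t)^{-\binom{n}{2}}\det(G_{n,\overline i}^{(0)}(t))$ and insert Key Lemma~II (Lemma~\ref{4}). The term $\det(H_{n-1,\overline{i-1}}^{(1)}(t))$ contributes the factor $(1+t)^{-\binom{n}{2}+\binom{n}{2}}=1$, giving $Q_{n-1,\overline{i-1}}(t)$, while the term $(1+t)^{n-1}\det(G_{n-1,\overline i}^{(0)}(t))$ contributes $(1+t)^{-\binom{n}{2}+(n-1)+\binom{n-1}{2}}$, and $-\binom{n}{2}+(n-1)+\binom{n-1}{2}=0$, giving $R_{n-1,\overline i}(t)$. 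For part (2), start from $Q_{n,\overline i}(t)=(1+t)^{-\binom{n+1}{2}}\det(H_{n,\overline i}^{(1)}(t))$ and insert Lemma~\ref{b}; the three resulting exponents of $(1+t)$ are $-\binom{n+1}{2}+n+\binom{n}{2}=0$, $-\binom{n+1}{2}+(n+1)+\binom{n}{2}=1$, and $-\binom{n+1}{2}+(2n-1)+\binom{n-1}{2}=0$, which yields $Q_{n-1,\overline{i-1}}(t)+(1+t)Q_{n-1,\overline i}(t)+R_{n-1,\overline i}(t)$. For part (3), start from $P_{n,\overline i}(t)=(1+t)^{-\binom{n}{2}}\det(H_{n,\overline i}^{(0)}(t))$ and insert Lemma~\ref{a}; the exponents are $-\binom{n}{2}+\binom{n}{2}=0$ for the first two terms (the second still carrying the scalar factor $t$) and $-\binom{n}{2}+(n-1)+\binom{n-1}{2}=0$ for the third, which yields $Q_{n-1,\overline{i-1}}(t)+tQ_{n-1,\overline i}(t)+R_{n-1,\overline i}(t)$.

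There is no genuine obstacle: as the surrounding text indicates, this lemma is a ``direct translation''. The only points demanding a little care are keeping the two normalizing exponents apart — $\binom{n}{2}$ for $P$ and $R$ versus $\binom{n+1}{2}$ for $Q$, each evaluated at the correct argument ($n$ on the left-hand side, $n-1$ on the right) — and noting that the degenerate indices are covered by the stated conventions $P_{0,\overline 0}=1$ and $P_{i,\overline j}=0$ for $j>i$ (and likewise for $Q$ and $R$), which are precisely the values forced by Lemma~\ref{0} in the cases $i=0$ and $i=n$.
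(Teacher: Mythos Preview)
Your proposal is correct and matches the paper's own treatment exactly: the paper states without further detail that Lemma~\ref{4'} is a ``direct translation'' of Lemmas~\ref{4},~\ref{b}, and~\ref{a}, and your computation carries out precisely that translation, with the exponent bookkeeping done correctly in all three parts.
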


We first deal with the cases $i=0$ and $i=n$.

 \begin{lem}\label{i=0,n} We have
  \begin{enumerate}
    \item[(i)] $Q_{n, \overline{0}}(t)=1+(1+t)Q_{n-1, \overline{0}}(t)$, and $Q_{n,\overline{n}}(t)=1$.
    \item[(ii)] $P_{n, \overline{0}}(t)=(1+t)P_{n-1, \overline{0}}(t)$, and $P_{n,\overline{n}}(t)=1$.
    \item[(iii)] $R_{n, \overline{0}}(t)=R_{n,\overline{n}}(t)=1$.
  \end{enumerate}
 \end{lem}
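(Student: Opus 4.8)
The plan is to split according to $i$: the three identities at $i=n$ come essentially for free from Lemma~\ref{4'}, whereas the three recurrences at $i=0$ require going back to honest Hankel determinants, since Lemma~\ref{4'} is only stated for $1\le i\le n$.

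For $i=n$ I would substitute $i=n$ into parts (1)--(3) of Lemma~\ref{4'} and use the boundary convention $Q_{n-1,\overline{n}}(t)=R_{n-1,\overline{n}}(t)=0$; every term on the right except $Q_{n-1,\overline{n-1}}(t)$ then drops out, giving
$$R_{n,\overline{n}}(t)=Q_{n,\overline{n}}(t)=P_{n,\overline{n}}(t)=Q_{n-1,\overline{n-1}}(t).$$
Iterating down to the convention $Q_{0,\overline{0}}(t)=1$ yields $R_{n,\overline{n}}(t)=Q_{n,\overline{n}}(t)=P_{n,\overline{n}}(t)=1$, which is the second clause of each of (i), (ii), (iii).

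For $i=0$ I would unfold the normalised minors through Lemma~\ref{0}. Parts (1) and (2) of that lemma give
$$\det(H_{n,\overline{0}}^{(0)}(t))=\det(H_n^{(1)}(t)),\qquad \det(H_{n,\overline{0}}^{(1)}(t))=\det(H_n^{(2)}(t)),\qquad \det(G_{n,\overline{0}}^{(0)}(t))=\det(G_n^{(1)}(t)),$$
after which I would feed in the Sulanke--Xin evaluations $\det(H_n^{(0)}(t))=(1+t)^{\binom{n}{2}}$ and $\det(H_n^{(1)}(t))=(1+t)^{\binom{n+1}{2}}$ recalled in the introduction. For $P$ this alone suffices: $P_{n,\overline{0}}(t)=(1+t)^{-\binom{n}{2}}(1+t)^{\binom{n+1}{2}}=(1+t)^{n}$, which plainly satisfies $P_{n,\overline{0}}(t)=(1+t)P_{n-1,\overline{0}}(t)$. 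For $R$, I would observe that every entry of $G_n^{(1)}(t)=(s_{i+j-1}(t))_{1\le i,j\le n}$ has subscript at least $1$, so $r_\ell(t)=(1+t)s_\ell(t)$ forces $G_n^{(1)}(t)=\frac{1}{1+t}H_n^{(1)}(t)$; hence $\det(G_n^{(1)}(t))=(1+t)^{-n}(1+t)^{\binom{n+1}{2}}=(1+t)^{\binom{n}{2}}$ and $R_{n,\overline{0}}(t)=1$.

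The one genuine computation is $\det(H_n^{(2)}(t))$, which governs $Q_{n,\overline{0}}(t)$, and this is where I expect the only real (if modest) effort. I would read it off the Desnanot--Jacobi identity~\eqref{dog} taken with $k=0$ for $a_\ell=r_\ell(t)$: inserting the two known evaluations turns~\eqref{dog} into the first-order recurrence $e_n=(1+t)^{n}e_{n-1}+(1+t)^{n(n+3)/2}$ for $e_n:=\det(H_n^{(2)}(t))$, with $e_0=1$, whose solution $e_n=(1+t)^{\binom{n+1}{2}}\bigl(1+(1+t)+\cdots+(1+t)^{n}\bigr)$ is verified by a one-line induction. Normalising, $Q_{n,\overline{0}}(t)=(1+t)^{-\binom{n+1}{2}}e_n=1+(1+t)+\cdots+(1+t)^{n}$, and pulling off the constant term while factoring $(1+t)$ from the rest gives $Q_{n,\overline{0}}(t)=1+(1+t)Q_{n-1,\overline{0}}(t)$ with $Q_{0,\overline{0}}(t)=1$, as claimed. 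One could instead stay purely combinatorial and decompose the innermost path of a family in $\Pi_{n,\overline{0}}^{(1)}(t)$ according to its first return to the $x$-axis, in the spirit of Lemmas~\ref{b} and~\ref{a}, but that route is fiddlier and gains nothing here. Apart from this, the only thing that needs care is the exponent bookkeeping $\binom{n+1}{2}-\binom{n}{2}=n$ and $2\binom{n+1}{2}-\binom{n}{2}=\frac{n(n+3)}{2}$, which is routine.
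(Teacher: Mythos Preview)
Your proof is correct and follows essentially the same route as the paper: for $i=n$ you iterate Lemma~\ref{4'} down to $Q_{0,\overline{0}}(t)=1$, and for $i=0$ you pass through Lemma~\ref{0} to honest Hankel determinants, invoke the Sulanke--Xin evaluations for $P$ and $R$, and use the Desnanot--Jacobi identity~\eqref{dog} to handle $\det(H_n^{(2)}(t))$ for $Q$. The only cosmetic difference is that the paper normalises \emph{before} solving the recurrence coming from~\eqref{dog} (obtaining $Q_{n,\overline{0}}(t)=Q_{n-1,\overline{0}}(t)+(1+t)^n$ directly), whereas you solve for $e_n=\det(H_n^{(2)}(t))$ first and then normalise; both lead to $Q_{n,\overline{0}}(t)=\sum_{k=0}^n(1+t)^k$.
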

 \begin{proof}
(i) We have $Q_{n, \overline{n}}(t)=Q_{n-1,
\overline{n-1}}(t)=\cdots=Q_{0,\overline{0}}(t)=1$ by
Lemma~\ref{4'}(2). By Lemma~\ref{0}, $Q_{n, \overline{0}}(t)=
(1+t)^{-\binom{n+1}{2}} \det( H_{n, \overline{0}}^{(1)}(t)
)=(1+t)^{-\binom{n+1}{2}} \det( H_{n}^{(2)}(t)
 )$.
We use the following identity (from (\ref{dog})) to compute $\det(
H_{n}^{(2)}(t))$:
 $$\det( H_{n+1}^{(0)}(t) )\det( H_{n-1}^{(2)}(t) )=\det( H_{n}^{(0)}(t) ) \det( H_{n}^{(2)}(t) )
  -\det( H_n^{(1)}(t) )^2.$$
By applying the know formulae for $\det(H_n^{(0)}(t)), \det(
H_n^{(1)}(t))$ and some simple calculation we reach at
%$$(1+t)^{-\binom{n}{2}}\det(H_{n-1}^{(2)}(t))=(1+t)^{-\binom{n+1}{2}}\det( H_{n}^{(2)}(t))-(1+t)^n,$$
%hence
$$Q_{n,\overline{0}}(t)=Q_{n-1,\overline{0}}(t)+(1+t)^n.$$
It can be solved that $Q_{n,\overline{0}}(t)=\sum_{k=0}^{n}(1+t)^k$,
therefore $$Q_{n,\overline{0}}(t)=1+(1+t)Q_{n-1, \overline{0}}(t).$$

(ii) We have $P_{n, \overline{n}}(t)=Q_{n-1, \overline{n-1}}(t)=1$
by Lemma~\ref{4'}(3) and (i). By Lemma~\ref{0}, we have $\det(H_{n,
\overline{0}}^{(0)}(t))=\det(H_{n}^{(1)}(t))=\det(H_{n,
 \overline{n}}^{(1)}(t))$.
 Thus $$P_{n, \overline{0}}(t)=(1+t)^{-\binom{n}{2}}\det(H_{n, \overline{0}}^{(0)}(t))=(1+t)^{-\binom{n}{2}}\det(H_{n,
 \overline{n}}^{(1)}(t))=(1+t)^nQ_{n,\overline{n}}(t)=(1+t)^n.$$
  Hence $$P_{n, \overline{0}}(t)=(1+t)P_{n-1, \overline{0}}(t).$$

(iii) We have $R_{n, \overline{n}}(t)=Q_{n-1, \overline{n-1}}(t)=1$
by Lemma~\ref{4'}(1) and (i).
 Besides, $$R_{n, \overline{0}}(t)=(1+t)^{-\binom{n}{2}} \det( G_{n,\overline{0}}^{(0)}(t))=(1+t)^{-\binom{n}{2}} \det(
 G_{n}^{(1)}(t))= (1+t)^{-\binom{n+1}{2}} \det(
 H_{n}^{(1)}(t)).$$
 The last identity is from the fact $s_n(t)=(1+t)^{-1}r_n(t)$ for $n \geq 1$. Thus
 $$R_{n, \overline{0}}(t)=(1+t)^{-\binom{n+1}{2}} \det(
 H_{n}^{(1)}(t))=(1+t)^{-\binom{n+1}{2}} \det( H_{n,\overline{n}}^{(1)}(t))=Q_{n, \overline{n}}(t)=1,$$ as desired.
 \end{proof}

%Note that the known classic results $\det( H_n^{(0)}(t) )=\det(
%G_n^{(0)}(t) )=(1+t)^{\binom{n}{2}}$, $\det( H_n^{(1)}(t) )=
%(1+t)^{\binom{n+1}{2}}$ and $\det(H_n^{(2)}(t))=
%(1+t)^{\binom{n+1}{2}}\sum_{k=0}^{n} (1+t)^k$ can be obtained
% readily.

% Note that $$P_{n, n}(t)=(1+t)^{-\binom{n}{2}} \det( H_{n, \overline{n}}^{(0)}(t) )=(1+t)^{-\binom{n}{2}} \det( H_{n}^{(0)}(t)
% )=1,$$ and $$Q_{n, n}(t)=(1+t)^{-\binom{n+1}{2}} \det( H_{n, \overline{n}}^{(1)}(t) )=(1+t)^{-\binom{n+1}{2}} \det( H_{n}^{(1)}(t)
% )=1.$$

The last pieces we need are the recurrence formulaes for $P_{n,
\overline{i}}(t)$ and $Q_{n, \overline{i}}(t)$ for $1\le i\le n$.

%\begin{lem}\label{2}
%  For $1 \leq i \leq n$,
%  $$\det( H_{n,\overline{i}}^{(1)}(t) )
%  = (1+t)^{n+1} \det( H_{n-1, \overline{i}}^{(1)}(t))
%  +\sum_{k=i}^{n}(1+t)^{\binom{n+1}{2}-\binom{k}{2}} \det( H_{k-1, \overline{i-1}}^{(1)}(t)).$$
%\end{lem}

 \begin{lem}\label{2}
   For $1 \leq i \leq n$, we have
    $$Q_{n, \overline{i}}(t)=(1+t)Q_{n-1, \overline{i}}(t)+\sum_{k=i}^{n}Q_{k-1, \overline{i-1}}(t).$$
 \end{lem}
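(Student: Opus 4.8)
The plan is to prove this by pure algebra from the three identities of Lemma~\ref{4'} together with the boundary values of Lemma~\ref{i=0,n}; no new combinatorics is required, since the bijective work has all been done already.

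\emph{Step 1.} First I would isolate the fact needed about the auxiliary quantities $R$. Lemma~\ref{4'}(1) says $R_{m,\overline i}(t)=Q_{m-1,\overline{i-1}}(t)+R_{m-1,\overline i}(t)$ whenever $1\le i\le m$. Fixing $i\ge 1$ and iterating this downward from $m=n-1$ to $m=i$, using the boundary value $R_{i-1,\overline i}(t)=0$ (equivalently $R_{i,\overline i}(t)=1=Q_{i-1,\overline{i-1}}(t)$ from Lemma~\ref{i=0,n}(iii) and Lemma~\ref{i=0,n}(i)), the telescoping yields
$$R_{n-1,\overline i}(t)=\sum_{k=i}^{n-1}Q_{k-1,\overline{i-1}}(t)\qquad(n\ge i),$$
with the right-hand side read as an empty sum when $n=i$. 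Formally this is a one-line induction on $n$.

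\emph{Step 2.} Now substitute the identity of Step~1 into Lemma~\ref{4'}(2). For $1\le i\le n$ this gives
$$Q_{n,\overline i}(t)=Q_{n-1,\overline{i-1}}(t)+(1+t)Q_{n-1,\overline i}(t)+\sum_{k=i}^{n-1}Q_{k-1,\overline{i-1}}(t)=(1+t)Q_{n-1,\overline i}(t)+\sum_{k=i}^{n}Q_{k-1,\overline{i-1}}(t),$$
where in the last equality $Q_{n-1,\overline{i-1}}(t)$ is absorbed as the $k=n$ summand. This is exactly the asserted recurrence.

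I expect no real obstacle: Lemma~\ref{2} is merely a bookkeeping consequence of the structural recurrences proved in Sections~\ref{se:KL} and~\ref{sec4}. The only point to watch is the base case of the telescoping in Step~1. As a sanity check one can also verify the extreme case $n=i$ of the statement directly: the left side is $Q_{i,\overline i}(t)=1$ and the right side is $(1+t)Q_{i-1,\overline i}(t)+Q_{i-1,\overline{i-1}}(t)=0+1=1$ by Lemma~\ref{i=0,n}.
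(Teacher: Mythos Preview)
Your proof is correct and essentially identical to the paper's own argument: both iterate Lemma~\ref{4'}(1) to express $R_{n-1,\overline i}(t)$ (equivalently $R_{n,\overline i}(t)$) as the telescoping sum $\sum_{k=i}^{n-1}Q_{k-1,\overline{i-1}}(t)$, using $R_{i,\overline i}(t)=Q_{i-1,\overline{i-1}}(t)=1$, and then substitute into Lemma~\ref{4'}(2). Your treatment of the boundary case $n=i$ is slightly more explicit than the paper's, but the content is the same.
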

 \begin{proof}
 Repeatedly applying Lemma~\ref{4'}(1), we have
$$  R_{n, \overline{i}}(t) =\sum_{k=i}^{n-1}Q_{k,\overline{i-1}}(t) +
R_{i, \overline{i}}(t). $$
%
%    \begin{align*}
%       R_{n, \overline{i}}(t) &= Q_{n-1, \overline{i-1}}(t) + R_{n-1, \overline{i}}(t)\\
%                   &= Q_{n-1, \overline{i-1}}(t) + Q_{n-2, \overline{i-1}}(t) + R_{n-2, \overline{i}}(t)\\
%                   &\phantom{===} \vdots\\
%                   &= \sum_{k=i}^{n-1}Q_{k,\overline{i-1}}(t) + R_{i, \overline{i}}(t).
%    \end{align*}
%  By Lemma~\ref{i=0,n}(i) and (iii),
Since $R_{i, \overline{i}}(t)=Q_{i-1, \overline{i-1}}(t)=1$, we then
have
  $$R_{n, \overline{i}}(t)= \sum_{k=i}^{n}Q_{k-1,\overline{i-1}}(t).$$
 Plug it into Lemma~\ref{4'}(2) and the lemma is proved.
 % for $1 \leq i \leq n$,
 %    \begin{align*}
 %        Q_{n, \overline{i}}(t) &= Q_{n-1, \overline{i-1}}(t) + (1+t)Q_{n-1, \overline{i}}(t) + R_{n-1, \overline{i}}(t)\\
 %                    &= Q_{n-1, \overline{i-1}}(t) + (1+t)Q_{n-1, \overline{i}}(t) + \sum_{k=i}^{n-1}Q_{k-1,
 %                    \overline{i-1}}(t).
 %    \end{align*}
 %Then by (\ref{eq:R=sumQ})
 %       $$Q_{n, \overline{i}}(t)= (1+t)Q_{n-1, \overline{i}}(t) + \sum_{k=i}^{n}Q_{k-1,
 %       \overline{i-1}}(t).$$
 \end{proof}

%\begin{lem}\label{3}
%  For $1 \leq i \leq n$,
%  $$\det( H_{n,\overline{i}}^{(0)}(t) )
%  = (1+t)^n \det( H_{n-1, \overline{i}}^{(0)}(t))
%  +\sum_{k=i}^{n}(1+t)^{\binom{n}{2}-\binom{k-1}{2}} \det( H_{k-1, \overline{i-1}}^{(0)}(t)).$$
%\end{lem}

 \begin{lem}\label{3}
  For $1 \leq i \leq n$,
  $$ P_{n, \overline{i}}(t)=(1+t)P_{n-1, \overline{i}}(t)+\sum_{k=i}^{n}P_{k-1, \overline{i-1}}(t)$$
 \end{lem}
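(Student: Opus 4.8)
The plan is to mirror the proof of Lemma~\ref{2}, but to feed the information into the \emph{third} relation of Lemma~\ref{4'} rather than the second. Subtracting Lemma~\ref{4'}(3) from Lemma~\ref{4'}(2) (or, equivalently, plugging the closed form $R_{n-1,\overline i}(t)=\sum_{k=i}^{n-1}Q_{k-1,\overline{i-1}}(t)$ established inside the proof of Lemma~\ref{2} into Lemma~\ref{4'}(3) and comparing with Lemma~\ref{2}) immediately yields the key identity
\[
P_{n,\overline i}(t)=Q_{n,\overline i}(t)-Q_{n-1,\overline i}(t),\qquad 1\le i\le n .
\]
Thus $P_{n,\overline i}(t)$ is, up to reindexing, a first difference of the sequence $Q_{\bullet,\overline i}(t)$, and the claimed recurrence for $P$ should fall out of the one for $Q$ by differencing.

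Concretely, I would substitute this identity everywhere in the target equality. The sum $\sum_{k=i}^{n}P_{k-1,\overline{i-1}}(t)=\sum_{k=i}^{n}\bigl(Q_{k-1,\overline{i-1}}(t)-Q_{k-2,\overline{i-1}}(t)\bigr)$ telescopes to $Q_{n-1,\overline{i-1}}(t)$ (the lower end vanishes because $Q_{i-2,\overline{i-1}}(t)=0$), while $(1+t)P_{n-1,\overline i}(t)=(1+t)Q_{n-1,\overline i}(t)-(1+t)Q_{n-2,\overline i}(t)$. Hence the right-hand side of the claim equals $(1+t)Q_{n-1,\overline i}(t)-(1+t)Q_{n-2,\overline i}(t)+Q_{n-1,\overline{i-1}}(t)$, which is precisely the difference of the Lemma~\ref{2} identity at level $n$ and at level $n-1$, namely $Q_{n,\overline i}(t)-Q_{n-1,\overline i}(t)$; by the key identity this is $P_{n,\overline i}(t)$, the left-hand side. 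This settles the range $1\le i\le n-1$.

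For the extreme value $i=n$ (where Lemma~\ref{2} at level $n-1$ is vacuous) I would argue directly from Lemma~\ref{i=0,n}(ii): $P_{n,\overline n}(t)=1$, while the right-hand side is $(1+t)P_{n-1,\overline n}(t)+P_{n-1,\overline{n-1}}(t)=0+1=1$. The one point demanding care---the main obstacle, such as it is---is the boundary bookkeeping in the telescoping: the relation $P_{m,\overline j}(t)=Q_{m,\overline j}(t)-Q_{m-1,\overline j}(t)$ gets applied with second index $j=i-1$ running down to $0$, which falls outside the $1\le j\le m$ hypothesis of Lemma~\ref{4'}, so it must be re-justified in the cases $j=0$ and $j=m$ using Lemma~\ref{i=0,n} (there $P_{m,\overline 0}(t)=(1+t)^m=Q_{m,\overline 0}(t)-Q_{m-1,\overline 0}(t)$ and $P_{m,\overline m}(t)=1=Q_{m,\overline m}(t)-Q_{m-1,\overline m}(t)$), together with the conventions $Q_{m,\overline j}(t)=0$ for $j>m$. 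Everything else is a routine finite-sum manipulation.
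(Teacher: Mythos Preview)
Your approach is essentially the same as the paper's: both arguments rest on the identity $P_{n,\overline i}(t)=Q_{n,\overline i}(t)-Q_{n-1,\overline i}(t)$ obtained from Lemma~\ref{4'}(2),(3), and both reduce the claim to differencing the recurrence of Lemma~\ref{2} at levels $n$ and $n-1$. The paper starts from the left-hand side and expands, you start from the right-hand side and collapse; the computations are mirror images of each other. If anything, your explicit bookkeeping for the boundary cases $j=0$, $j=m$, and $i=n$ (via Lemma~\ref{i=0,n}) is more careful than the paper's, which silently relies on those same facts.
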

 \begin{proof} We have
    $$P_{n, \overline{i}}(t) = Q_{n-1, \overline{i-1}}(t) + tQ_{n-1, \overline{i}}(t) + R_{n-1, \overline{i}}(t)=Q_{n, \overline{i}}(t) - Q_{n-1, \overline{i}}(t)$$
   by Lemma~\ref{4'} (3) and (2). Applying the result of Lemma~\ref{2}, we obtain
    \begin{alignat*}{2}
    P_{n, \overline{i}}(t) &= \left( (1+t)Q_{n-1,\overline{i}}(t)+ \sum_{k=i}^{n}Q_{k-1,\overline{i-1}}(t)\right)
                              -\left( (1+t)Q_{n-2, \overline{i}}(t)+ \sum_{k=i}^{n-1}Q_{k-1,\overline{i-1}}(t)\right)\\
                           &= (1+t) \left( Q_{n-1, \overline{i}}(t) - Q_{n-2, \overline{i}}(t)\right)
                              +\sum_{k=i+1}^{n}\left( Q_{k-1, \overline{i-1}}(t) - Q_{k-2, \overline{i-1}}(t)\right) + Q_{i-1,\overline{i-1}}(t)\\
                           &=(1+t)P_{n-1, \overline{i}}(t)+\sum_{k=i}^{n}P_{k-1, \overline{i-1}}(t),
    \end{alignat*}
as desired.
%    with the last equality by $Q_{i-1,
%\overline{i-1}}(t)=P_{i-1,\overline{i-1}}(t)=1$
 \end{proof}

%%%%%%%%%%%%%%%%%%%%%%%%%%%%%%%%%%%%%%%%%%%%%%%%%%%%%%%%%%%%%%%%%%%%%%%%%%%%%%%%%%%%%%%%%%%%
%%%%%%%  section    %%%%%%%%%%%%%%%%%%%%%%%%%%%%%%%%%%%%%%%%%%%%%%%%%%%%%%%%%%%%%%%%%%%%%%%%
%%%%%%%%%%%%%%%%%%%%%%%%%%%%%%%%%%%%%%%%%%%%%%%%%%%%%%%%%%%%%%%%%%%%%%%%%%%%%%%%%%%%%%%%%%%%

\section{Proof of the Main Theorem}\label{sec6}
We are now ready to prove the main theorem.

\begin{proof}[Proof of Theorem~\ref{th:L01} (1)]
Expanding $\Theta_n(t)$ by Lemma~\ref{A}, we have
$\Theta_n(t)=\sum_{i=0}^{n} \alpha^i \beta^{n-i} P_{n,
\overline{i}}(t)$.
%    \begin{equation*}
%        \Theta_n(t)=(1+t)^{-\binom{n}{2}}\det( H_n^{(0, 1)}(t) )
%        =(1+t)^{-\binom{n}{2}}\sum_{i=0}^{n}\alpha^i\beta^{n-i}\det( H_{n, \overline{i}}^{(0)}(t) )
%        =\sum_{i=0}^{n} \alpha^i \beta^{n-i} P_{n, \overline{i}}(t).
%    \end{equation*}
%  By Lemma~\ref{i=0,n}, $P_{n,\overline{n}}(t)=1,\ \forall n \geq 1$ and
%  $P_{n, \overline{0}}(t)=(1+t)P_{n-1,\overline{0}}(t)$.
%
%  By Lemma~\ref{3},
Splitting the sum into two parts and applying Lemma~\ref{i=0,n}
and~\ref{3}, we have
    \begin{alignat*}{2}
        \Theta_n(t) &= \beta^n P_{n, \overline{0}}(t)+\sum_{i=1}^{n} \alpha^i \beta^{n-i} P_{n, \overline{i}}(t)\\
                    &= \beta^n (1+t) P_{n-1, \overline{0}}(t)+\sum_{i=1}^{n} \alpha^i \beta^{n-i} \left( (1+t) P_{n-1, \overline{i}}(t) + \sum_{k=i}^{n}P_{k-1, \overline{i-1}}(t) \right) \\
                    &= \beta (1+t) \sum_{i=0}^{n-1} \alpha^i \beta^{n-1-i} P_{n-1, \overline{i}}(t)+\sum_{i=1}^{n} \alpha^i \beta^{n-i} \left( \sum_{k=i}^{n}P_{k-1, \overline{i-1}}(t) \right)\\
%        \intertext{Interchanging the order of summation, we have}
                    &= \beta (1+t) \Theta_{n-1}(t) +\sum_{k=1}^{n} \alpha \beta^{n-k} \left( \sum_{i=1}^{k} \alpha^{i-1} \beta^{k-i} P_{k-1, \overline{i-1}}(t) \right)\\
%                    &= \beta (1+t) \Theta_{n-1}(t) + \sum_{k=1}^{n} \alpha \beta^{n-k} \Theta_{k-1}(t)\\
                    &= \beta (1+t)\Theta_{n-1}(t)+\alpha \sum_{m=0}^{n-1} \beta^{m} \Theta_{n-1-m}(t),
    \end{alignat*}
as desired.
\end{proof}

\begin{proof} [Proof of Theorem~\ref{th:L01} (2)]
%[Proof of Theorem~\ref{th:L12}]
The proof is similar to above. Expanding $\Phi_n(t)$ by
Lemma~\ref{A}, we have $\Phi_n(t)=\sum_{i=0}^{n} \alpha^i
\beta^{n-i} Q_{n,\overline{i}}(t)$. Splitting the sum in two parts
and using Lemma~\ref{i=0,n} and ~\ref{2} we have
%
%  By Lemma~\ref{i=0,n}, $Q_{n,\overline{n}}(t)=1,\ \forall n \geq 1$
%  and $Q_{n, \overline{0}}(t)=1+(1+t)Q_{n-1, \overline{0}}(t).$
%  Thus by Lemma~\ref{A},
%          $$\Phi_n(t) = (1+t)^{-\binom{n+1}{2}}\det( H_n^{(1, 2)}(t))= (1+t)^{-\binom{n+1}{2}}\sum_{i=0}^{n}\alpha^i\beta^{n-i}\det( H_{n, \overline{i}}^{(1)}(t))
%                  = \sum_{i=0}^{n} \alpha^i \beta^{n-i} Q_{n,\overline{i}}(t)$$
%
%  Then by Lemma~\ref{2},
      \begin{alignat*}{2}
        \Phi_n(t)  &= \beta^n Q_{n, \overline{0}}(t)+\sum_{i=1}^{n} \alpha^i \beta^{n-i} Q_{n, \overline{i}}(t)\\
                   &= \beta^n \left(1+(1+t)Q_{n-1, \overline{0}}(t)\right) +\sum_{i=1}^{n} \alpha^i \beta^{n-i} \left( (1+t) Q_{n-1, \overline{i}}(t) + \sum_{k=i}^{n}Q_{k-1, \overline{i-1}}(t) \right) \\
 %                  &= \beta^n + \beta (1+t) \sum_{i=0}^{n-1} \alpha^i \beta^{n-1-i} Q_{n-1, \overline{i}}(t)+\sum_{i=1}^{n} \alpha^i \beta^{n-i} \left(\sum_{k=i}^{n}Q_{k-1, \overline{i-1}}(t) \right)\\
                   &= \beta^n + \beta (1+t)\Phi_{n-1}(t)+ \sum_{k=1}^{n}\alpha \beta^{n-k} \sum_{i=1}^{k}\alpha^{i-1} \beta^{k-i} Q_{k-1, \overline{i-1}}(t)\\
 %                  &= \beta^n + \beta (1+t)\Phi_{n-1}(t)+ \sum_{k=1}^{n}\alpha \beta^{n-k} \Phi_{k-1}(t)\\
                   &= \beta^n + \beta (1+t)\Phi_{n-1}(t)+\alpha \sum_{m=0}^{n-1} \beta^{m} \Phi_{n-1-m}(t),
      \end{alignat*}
as desired.
\end{proof}

\begin{proof}[Proof of Theorem~\ref{th:L01} (3)]
%[Proof of Theorem~\ref{th:S01}]
 We prove this by induction on $n$. The case $n=1$ holds trivially.
By Lemma~\ref{A} and definition of $R_{n, \overline{i}}(t)$ we can
expand $\Psi_n(t)$ into
$$ \Psi_n(t)= \alpha^n R_{n, \overline{n}}(t)+ \sum_{i=1}^{n-1} \alpha^i\beta^{n-i} R_{n, \overline{i}}(t) + \beta^n R_{n,\overline{0}}(t).$$

 %one can check
 %$$\Psi_1(t)= \alpha+\beta=\alpha \beta^0 \Psi_{0}(t) + \beta(1+t)\Psi_{0}(t)-t\beta.$$
%
% $$\Psi_1(t)=\frac{1}{(1+t)^{1 \choose 2}}\det(G_1^{(0,1)}(t))=\alpha s_0(t)+\beta
% s_1(t)=\alpha+\beta=\alpha \beta^0 \Psi_{0}(t) + \beta(1+t)\Psi_{0}(t)-t\beta.$$

% Thus by Lemma~\ref{A},
%   \begin{alignat*}{2}
%        \Psi_n(t) &= (1+t)^{-\binom{n}{2}}\det( G_n^{(0, 1)}(t) )\\
%                  &= (1+t)^{-\binom{n}{2}}\sum_{i=0}^{n}\alpha^i\beta^{n-i}\det( G_{n, \overline{i}}^{(0)}(t) )\\
%                  &= \sum_{i=0}^{n} \alpha^i \beta^{n-i} R_{n, \overline{i}}(t)\\
%                  &= \alpha^n R_{n, \overline{n}}(t)+ \sum_{i=1}^{n-1} \alpha^i\beta^{n-i} R_{n, \overline{i}}(t) + \beta^n R_{n,\overline{0}}(t).
%   \end{alignat*}

 Now, by Lemma~\ref{4'}, we have $R_{n, \overline{i}}(t) = Q_{n-1,\overline{i-1}}(t) + R_{n-1,\overline{i}}(t)$ for $1 \leq i \leq n-1$.
 Also $R_{n, \overline{n}}(t)=Q_{n-1, \overline{n-1}}(t)=1$ and $R_{n, \overline{0}}(t)=R_{n-1, \overline{0}}(t)=1$ by Lemma~\ref{i=0,n}.
 Substitute these into above
 we reach
    \begin{alignat*}{2}
        \Psi_n(t) &= \alpha^n Q_{n-1, \overline{n-}1}(t)+ \sum_{i=1}^{n-1} \alpha^i\beta^{n-i} \left(Q_{n-1, \overline{i-1}}(t)+R_{n-1, \overline{i}}(t)\right) + \beta^n R_{n-1, \overline{0}}(t)\\
                  &= \alpha \sum_{j=0}^{n-1} \alpha^j\beta^{n-1-j} Q_{n-1, \overline{j}}(t)+ \beta \sum_{j=0}^{n-1} \alpha^j\beta^{n-1-j} R_{n-1,\overline{j}}(t)\\
                  &= \alpha \Phi_{n-1}(t) + \beta \Psi_{n-1}(t).
   \end{alignat*}
 Then by Theorem~\ref{th:L01} (2) and the induction hypothesis, we get
   \begin{alignat*}{2}
        \Psi_n(t) &= \alpha \Phi_{n-1}(t) + \beta \Psi_{n-1}(t)\\
                  &= \alpha \left( \alpha \sum_{m=0}^{n-2} \beta^{m} \Phi_{n-2-m}(t) + \beta (1+t) \Phi_{n-2}(t) + \beta^{n-1}
                  \right)\\
                  &\phantom{==}+ \beta \left( \alpha \sum_{m=0}^{n-2} \beta^{m} \Psi_{n-2-m}(t) + \beta (1+t) \Psi_{n-2}(t) - t\beta^{n-1}
                  \right)\\
                  &= \alpha \sum_{m=0}^{n-2} \beta^{m} \left( \alpha \Phi_{n-2-m}(t) + \beta \Psi_{n-2-m}(t) \right) + \beta (1+t) \left( \alpha \Phi_{n-2}(t) + \beta \Psi_{n-2}(t) \right)\\
                  &\phantom{==}+ \alpha \beta^{n-1} \Psi_{0}(t) - t
                  \beta^n\\
                  &= \alpha \sum_{m=0}^{n-1} \beta^{m} \Psi_{n-1-m}(t) + \beta (1+t) \Psi_{n-1}(t)- t
                  \beta^n,
   \end{alignat*}
 and the proof is completed.
\end{proof}

\begin{proof}[Proof of Theorem~\ref{th:L01} (4)]
%[Proof of Theorem~\ref{th:S12}]
Simply by using the identity $r_n(t)=(1+t)s_n(t)$ for $n \geq 1$ and
we are done.
\end{proof}

\section{Concluding notes} A natural extension is to consider the Hankel determinants
in which each entry is the linear combination of more than two
consecutive terms of $t$-large (or small) Schr\"oder numbers.
However, a proof using lattice path models turns out to be messy and
seems not so attractive. Another natural generalization is to put
different weights with respect to the heights, or consider the
$q$-analogue versions. We leave these interesting problems to the
readers.

%%%%%%%%%%%%%%%%%%%%%%%%%%%%%%%%%%%%%%%%%%%%%%%%%%%%%%%%%%%%%%%%%%%%%%%%%%%%%%%%%%%%%%%%%%%%
%%%%%%%  section    %%%%%%%%%%%%%%%%%%%%%%%%%%%%%%%%%%%%%%%%%%%%%%%%%%%%%%%%%%%%%%%%%%%%%%%%
%%%%%%%%%%%%%%%%%%%%%%%%%%%%%%%%%%%%%%%%%%%%%%%%%%%%%%%%%%%%%%%%%%%%%%%%%%%%%%%%%%%%%%%%%%%%

%\acknowledgements \label{se:ack} At the end of the manuscript, right
%before the bibliography you might want to place an acknowledgement.
%This can be easily done by using the command
%\verb!\acknowledgements! as you can see here.

%\bibliographystyle{abbrvnat}\label{sec:biblio}
% use the following instead if you encounter problems
%\bibliographystyle{alpha}
%\bibliography{Hankel_Determinant}

\end{document}